\documentclass[12pt]{amsart}

\usepackage{cmap}
\usepackage[T1]{fontenc}
\usepackage{lmodern}
\usepackage{amssymb}
\usepackage[ruled,vlined]{algorithm2e}
\usepackage[letterpaper, left=2.5cm, right=2.5cm, top=2.5cm,
bottom=2.5cm]{geometry}
\usepackage{amsmath}
\usepackage[english]{babel}
\newtheorem{theorem}{Theorem}
\newtheorem{case}{Case}
\newtheorem{conjecture}{Conjecture}
\newtheorem{corollary}{Corollary}
\newtheorem{lemma}{Lemma}
\newtheorem{problem}{Problem}
\numberwithin{equation}{section}

\usepackage{xcolor}

\usepackage{hyperref}
\hypersetup{
	colorlinks,
	linkcolor={blue!60!black},
	citecolor={blue!60!black},
	urlcolor={blue!60!black}
}

\begin{document}
	\title{Epistemic fair division of independence structures}
	
		\author{Marcin Anholcer}
	\address{Institute of Informatics and Quantitative Economics, Poznań University of Economics and Business, Poznań, Poland}
	\email{marcin.anholcer@ue.poznan.pl}
	
		\author{Maciej Bartkowiak}
	\address{Doctoral School, Poznań University of Economics and Business, Poznań, Poland}
	\email{maciej.bartkowiak@ue.poznan.pl}
	
		\author{Bartłomiej Bosek}
	\address{Institute of Theoretical Computer Science, Faculty of Mathematics and Computer Science, Jagiellonian University, Kraków, Poland}
	\email{bartlomiej.bosek@uj.edu.pl}
	
		\author{Jarosław Grytczuk}
\address{Faculty of Mathematics and Information Science, Warsaw University
	of Technology, 00-662 Warsaw, Poland}
\email{jaroslaw.grytczuk@pw.edu.pl}

\thanks{This research was funded by the National Science Centre, Poland (grant number: 2023/51/B/HS4/00829).}

\begin{abstract}
	We study the problem of fair division of indivisible goods with constraints imposed by a prescribed \emph{independence structure}, that is, a family of subsets of goods closed under taking subsets. As a motivating example, imagine that the goods to be divided are the available connections in a logistic, financial, or social network. The admissible bundle of goods for each agent must correspond to an \emph{acyclic} set of edges, corresponding to a basic feasible solution to a linear network problem to be solved (like, e.g., the Minimum Cost Flow Problem). Suppose that all agents assign the same value to each good (in the example, the network connections are equally important for every agent) and evaluate each bundle by summing the values of its goods. Is there a \emph{fair partition} of the goods into such acyclic bundles?
	
	Surprisingly, the answer is yes, provided that the number of agents is at least the \emph{arboricity} of $G$ (the least number of parts in an acyclic partition of the edges of $G$), and the fairness requirement is envy-freeness up to one good (EF1). The situation becomes more mysterious when agents have arbitrary additive valuations. Our main result guarantees that, in this case, \emph{epistemic} EF1 partitions always exist, which means that each agent receives an acyclic bundle for which there exists a feasible partition of the remaining goods into acyclic bundles that they do not envy up to one good.
	
	We derive this conclusion from a general result for abstract \emph{independence structures} defined on the sets of goods. We also discuss connections with several conjectures concerning \emph{matroids}. In particular, we prove that any \emph{Hamiltonian matroid} partitionable into two independent sets admits an EF1 bipartition with respect to a common monotone valuation.

    We complement our results with a constructive perspective: we present explicitly two algorithms for computing the fair allocations described above. Finally, we provide illustrative examples to demonstrate these algorithms on specific instances.
    \\
    \textbf{JEL Classification:} C72, C78, D63.\\
    \textbf{MSC 2020 Classification:} 91B32, 91A10, 05B35, 05C69.
\end{abstract}
\maketitle

\section{Introduction}
Fair division studies how to allocate resources among several agents in a fair manner. Its modern formal treatment is often traced back to Steinhaus' classical work on cake cutting \cite{steinhaus1948}, and has since developed into two major strands: the division of divisible resources, surveyed for example by Brams and Taylor \cite{Brams1996} and Moulin \cite{Hervebook2003}, and the division of indivisible goods, which has become a central topic in computational social choice and algorithmic game theory; see, e.g., \cite{Amanatidis2022,Procaccia2020}. Closely related questions also arise in economics and operations research, where indivisible resources must often be allocated subject to institutional or technological restrictions. Among the examples, one can list dividing the marital property after divorce, matching projects with employees \cite{Brams01122015}, dividing goods by food banks \cite{Aleksandrov20152540}, allocation of courses to students \cite{Biswas20242162}, recommendation algorithms allocating products to re-sellers in social commerce platforms \cite{Gupta20233744} and others.

In the indivisible setting, exact envy-freeness may fail to exist, which has motivated the study of relaxations. Two of the most prominent are envy-freeness up to one good (EF1) and envy-freeness up to any good (EFX). EF1 was already central in the early algorithmic literature on approximate envy-freeness \cite{Lipton2004} and is now regarded as one of the standard benchmark notions for indivisible goods; see also \cite{Budish2011}. EFX, introduced and systematically studied by Caragiannis et al.\ and Plaut and Roughgarden, is a stronger and conceptually very appealing relaxation, but its existence for general instances, even with additive valuations, remains one of the major open problems in fair division \cite{Caragiannis2019,Plaut2020}. Recently, Akrami et al. \cite{Akrami2026EFXCounterexample} presented a counterexample to the existence of EFX division with arbitrary monotone valuations.

Most of the classical EF1/EFX literature considers the unconstrained setting, where every subset of goods can in principle be assigned to an agent. In many applications, however, allocations must satisfy additional feasibility constraints. These may arise from capacities, categories, compatibility restrictions, or structural conditions on acceptable bundles \cite{DrorFeldmanSegalHalevi2023, SuksompongSurvey2021}. Such constraints capture, for instance, allocation rules arising from scheduling and matching, limited resource compatibility, or budget restrictions.

A natural and flexible framework for modelling such restrictions is provided by matroids. Matroid constraints subsume, among others, cardinality-type and partition-type restrictions, and in the graphical case they encode acyclicity through the graphic matroid. Thus, matroids offer a common language for expressing feasible bundles while preserving enough combinatorial structure to support existence and algorithmic analysis.

Fair division under bundle constraints has attracted growing attention in recent years. Biswas and Barman \cite{BarmanBiswas2018} studied fair division under cardinality constraints, establishing EF1 guarantees in that model and showing, in particular, that under identical additive valuations EF1 can also be computed efficiently under laminar matroid constraints. More generally, Dror, Feldman, and Segal-Halevi \cite{DrorFeldmanSegalHalevi2023} investigated fair division under heterogeneous matroid constraints, obtaining both positive and negative results for EF1 across several valuation and matroid classes. Recent work has also examined the interplay of fairness and efficiency under constraints, showing, for example, that maximum Nash welfare continues to yield meaningful approximate-EF1 guarantees under broad matroidal feasibility assumptions \cite{CooksonEbadianShah2025,WangChenNong2024}. Together, these results suggest that matroids form one of the most promising general frameworks for constrained fair division.

At the same time, graph-based models have opened several new directions in fair division. One line places a graph on the agents and studies fairness relative to local comparison or limited information. Aziz et al.\ \cite{Aziz2018socgraph} introduced epistemic notions of envy-freeness parameterized by a social graph, thereby formalizing the idea that an agent may observe only part of the allocation. More recently, Caragiannis et al.\ \cite{CaragiannisEtAl2023EEFX} proposed epistemic EFX (EEFX), a relaxation of EFX that always exists for additive valuations, and Akrami and Rathi \cite{AkramiRathi2025} extended this existence result to general monotone valuations. These notions are particularly appealing in settings where strong ex post fairness appears difficult to guarantee but one may still hope for fairness after a suitable epistemic reinterpretation of the allocation.

A different graph-based direction concerns structure on the goods themselves. For example, connected-bundle models require each agent's bundle to induce a connected subgraph in a given item graph \cite{Bei_2022,BILO2022}. More recently, Christodoulou et al.\ \cite{ChristodoulouEtAl2023} studied a valuation model in which agents are vertices, goods are edges, and each good is valuable only to its endpoints; they proved that EFX allocations always exist in this setting, even though deciding the existence of an EFX orientation is NP-hard. Subsequent work by Zeng and Mehta \cite{ZengMehta2025} and others refined the structural picture of EFX on graphs. These results highlight that graph structure can sometimes recover strong fairness guarantees unavailable in general.

In this paper, we focus on fair division of indivisible goods under independence-structure constraints, with matroids as the central motivating class. Our motivation comes in part from the graphic matroid viewpoint. If the goods are edges of a graph and feasible bundles are required to be independent in the graphic matroid, then every agent must receive a forest. This turns fair allocation into a decomposition problem: can one partition a weighted graph into feasible forests that are fair in the EF1 or EFX sense? More generally, given a common matroid on the ground set of goods, one may ask whether the goods admit a partition into independent bundles satisfying strong fairness guarantees. Such questions combine the global comparison aspect of EF1/EFX with the local exchange structure of matroids, and appear to be largely unexplored beyond several important special cases.

Our contribution is to study this constrained setting through both standard and epistemic fairness lenses. Conceptually, our results contribute to an emerging picture in which matroid structure provides a natural combinatorial framework for constrained fair division, while epistemic relaxations offer a promising route when stronger notions such as EFX are difficult or impossible to obtain directly.

Our main contributions are as follows. Theorem \ref{Theorem Epistemic Matroid} proves the existence of epistemic EF1 partitions for arbitrary additive valuations in every hereditary class of independence structures having the EF1 property. Theorem \ref{Theorem Matroid Hamiltonian} establishes a common-valuation EF1 bipartition for Hamiltonian matroids with chromatic number at most $2$. In addition, we present two algorithms that compute the corresponding allocations and demonstrate their behavior on illustrative examples, highlighting both the intuition and the key structural properties behind our constructions.

\section{The setting and the results}

We consider a variant of the fair division problem of indivisible goods in the following setting. Let $E=\{g_1,g_2,\ldots,g_m\}$ be the set of \emph{goods} and let $A=\{1,2,\ldots,n\}$ be the set of \emph{agents}. Each agent $i\in A$ assigns a real number $v_i(g_j)\geqslant 0$ to each good $g_j$. In this way, any subset of goods $X\subseteq E$, called a \emph{bundle}, gets a value $v_i(X)=\sum_{g_j\in X}v_i(g_j)$ given by agent $i$. Such valuations are called \emph{additive}. Notice that all the information about additive valuations can be stored in an $n\times m$ matrix $B=(b_{ij})$, where $b_{ij}=v_i(g_j)$. More generally, a valuation $v\colon 2^E\to \mathbb{R}_{\geqslant 0}$ is \emph{monotone} if $v(X)\leqslant v(Y)$ whenever $X\subseteq Y$. Every nonnegative additive valuation is monotone. Theorem \ref{Theorem Epistemic Matroid} concerns additive valuations, whereas Lemma \ref{Lemma Circle Equal}, Theorem \ref{Theorem Matroid Hamiltonian}, and the conjectures in Section \ref{section:finalremarks} are stated for the more general class of monotone valuations.

The problem of \emph{fair division} is to find a partition $E=X_1\cup \cdots \cup X_n$ of the set of goods into bundles $X_i$ such that the \emph{allocation} of the bundles to the agents ($X_i$ goes to agent $i$) is \emph{fair} in some precisely defined way. Throughout, a partition into $n$ bundles means an ordered $n$-tuple of pairwise disjoint, possibly empty, sets whose union is the relevant ground set. Ideally, the allocation would be fair if no one \emph{envied} anyone else, that is, if for every pair of agents $i,j\in A$ the following inequality was satisfied:
\[
v_i(X_i)\geqslant v_i(X_j).
\]
This is unfortunately not possible in general when single goods cannot be divided. For a nonempty bundle $X$, let
\[
v^-_i(X)=v_i(X\setminus\{g\}),
\]
where $g$ is a good of maximum value $v_i(g)$ to agent $i$ among the goods in $X$. We also set $v^-_i(\emptyset)=0$. For additive valuations, we say that agent $i$ does not envy agent $j$ up to one good if $v_i(X_i)\geqslant v^-_i(X_j)$. The allocation is said to satisfy \emph{envy-freeness up to one good} (EF1, for short) if this condition holds for every pair of agents $i,j\in A$. For general monotone valuations, we use the existential version: if $X_j\neq \emptyset$, there must exist a good $g\in X_j$ such that $v_i(X_i)\geqslant v_i(X_j\setminus\{g\})$; if $X_j=\emptyset$, the condition is automatic.

It is known that for any instance of the fair division problem, an EF1 allocation exists \cite{Lipton2004}. However, in many real-world applications it is desirable that the bundles $X_i$ of the partition satisfy additional constraints, as mentioned above \cite{DrorFeldmanSegalHalevi2023,SuksompongSurvey2021}. It is therefore convenient to assume that the set of goods $E$ is equipped with some additional combinatorial structure.

In this paper we consider a setting in which parts of the partition are restricted to the family of \emph{independent} sets in a certain combinatorial structure, such as a graph, a matroid, or any simplicial complex. Recall that an \emph{independence structure}, or an \emph{abstract simplicial complex}, is any pair $M=(E,\mathcal{I})$, where $E$ is a finite set and $\mathcal{I}$ is a family of subsets of $E$ closed under taking subsets, that is, if $S\in \mathcal{I}$ and $X\subseteq S$, then $X\in \mathcal{I}$.

For a given independence structure $M$, we denote by $\chi(M)$ the \emph{chromatic number} of $M$, that is, the least number of parts in a partition of the ground set $E$ into independent sets. Clearly, we must have $\chi(M)\leqslant n=|A|$ in order to have at least a chance for a \emph{feasible} partition of the set of goods into $n$ independent bundles.

Given an independence structure $M$ and an integer $q\geqslant \chi(M)$, we say that $M$ is \emph{EF1-$q$-colorable} if for every nonnegative additive valuation on $E$ there exists an EF1 partition of $E$ into $q$ independent sets. A class of independence structures $\mathcal{M}$ has the \emph{EF1 property} if every $M\in \mathcal{M}$ is EF1-$q$-colorable for every integer $q\geqslant \chi(M)$.

\begin{problem}\label{Problem Main IS}
	Which classes of independence structures have the EF1 property?
\end{problem}

Our main motivation comes from the following intriguing conjecture (see \cite{SuksompongSurvey2021}).

\begin{conjecture}\label{Conjecture Matroid One Valuation}
	The class of matroids satisfies the EF1 property.
\end{conjecture}

Recall that a \emph{matroid} is an independence structure satisfying the \emph{exchange} property, that is, given two independent sets $X$ and $Y$ with $|X|>|Y|$, there is an element $x\in X\setminus Y$ such that $Y\cup\{x\}$ is independent. A prototypical matroid can be obtained by taking any finite set $E$ of vectors in an arbitrary vector space and defining the family of independent sets to consist of all linearly independent subsets of $E$. Another basic example is obtained from a graph $G$ by defining independent sets as those subsets of edges of $G$ which do not contain a cycle. Such structures appear, for instance, in many real-life problems arising in logistics, telecommunication, and financial networks. In particular, all feasible basic solutions in linear network flow problems, such as the Minimum Cost Flow Problem, the Maximum Flow Problem, or the Shortest Path Problem, are spanning forests or spanning trees \cite{Ahuja1993NetworkFT}.

Matroids are especially relevant here because they provide a broad language for modelling feasibility while still supporting structural arguments through exchange and restriction operations.

Conjecture \ref{Conjecture Matroid One Valuation} is known to hold for some wide classes of matroids, most notably for \emph{base-orderable} matroids and for \emph{regular} matroids; see \cite{BarmanBiswas2018,DrorFeldmanSegalHalevi2023}. Akrami, Raj, and V\'{e}gh \cite{Akrami2026matroids} proved that its validity follows from a conjecture of Gabow. One of our results points to another possible route, based on Hamiltonicity of matroids, which we describe below.

Assuming that the above conjecture is true, it is natural to ask for its stronger version, where the agents may have distinct additive valuations.

\begin{conjecture}\label{Conjecture Matroid Many Valuations}
	Let $M$ be a matroid on the set of goods $E$, with $\chi(M)\leqslant n$, and suppose that there are $n$ agents having additive valuations $v_1,\ldots, v_n$. Then there exists an EF1 partition of $E$ into $n$ independent sets of $M$.
\end{conjecture}

We make here a small step supporting the validity of Conjecture \ref{Conjecture Matroid Many Valuations} by proving its \emph{epistemic} version for hereditary classes of independence structures having the EF1 property.

Using a concept analogous to epistemic EFX \cite{CaragiannisEtAl2023EEFX}, we say that an allocation $(X_1,\ldots, X_n)$ is \emph{epistemic EF1} if for every $i\in A$ there exists (possibly another -- \emph{imaginary}) allocation $(Y_1,\ldots, Y_n)$ such that $X_i=Y_i$ and $v_i(X_i)\geqslant v^-_i(Y_j)$ for every $j\neq i$. For monotone valuations, the analogous definition uses the existential formulation: for every $j\neq i$ with $Y_j\neq \emptyset$, there must exist a good $g\in Y_j$ such that $v_i(X_i)\geqslant v_i(Y_j\setminus\{g\})$. Notice that these imaginary allocations may vary from one agent to another. In the constrained version of the problem we demand that all bundles in each of the imaginary allocations are independent sets of the given independence structure.

To state our main result we need to recall the following basic notion. Let $M=(E,\mathcal{I})$ be any independence structure and let $S$ be a subset of $E$. Then, the \emph{restriction} of $M$ to the set $S$ is the structure $M|_S=(S,\mathcal{I}_S)$ whose family of independent sets $\mathcal{I}_S$ consists of all sets of the form $X\cap S$, with $X\in \mathcal{I}$. We will refer to $M|_S$ as the \emph{substructure} of $M$ \emph{induced} by the set $S$. The class of independence structures $\mathcal{M}$ is called \emph{hereditary} if it is closed under induced substructures.

\begin{theorem}\label{Theorem Epistemic Matroid}
Let $\mathcal{M}$ be a hereditary class of independence structures having the EF1 property. Let $M=(E,\mathcal{I})$ be any structure from $\mathcal{M}$, with $\chi(M)\leqslant n$, and suppose that there are $n$ agents having arbitrary additive valuations $v_1,\ldots, v_n$. Then there exists an epistemic EF1 partition of $E$ into $n$ independent sets of $M$.
\end{theorem}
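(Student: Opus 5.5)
The plan is to build the allocation greedily, in the style of the epistemic EFX constructions of Caragiannis et al. Each agent picks her bundle as the ``worst'' part of an EF1 partition of the remaining goods, computed with respect to her own valuation. The hereditary EF1 property is what certifies the imaginary allocation for that agent.

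\textbf{Setup.} Set $E_0=E$ and $M_0=M$. We process the agents in the order $1,2,\ldots,n$. Suppose the agents $1,\ldots,k-1$ have already received bundles $X_1,\ldots,X_{k-1}$. Let $E_{k-1}=E\setminus(X_1\cup\cdots\cup X_{k-1})$ be the remaining goods and $M_{k-1}=M|_{E_{k-1}}$. We maintain two invariants:
\begin{itemize}
\item[(a)] $M_{k-1}\in\mathcal{M}$ and $\chi(M_{k-1})\leqslant n-k+1$;
\item[(b)] each $X_i$ with $i<k$ is independent in $M$.
\end{itemize}
Invariant (a) is preserved automatically because $\mathcal{M}$ is hereditary.

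\textbf{Choice of agent $k$'s bundle.} By (a) and the EF1 property applied with $q=n-k+1\geqslant\chi(M_{k-1})$, there is a partition $E_{k-1}=Z_k\cup\cdots\cup Z_n$ into independent sets that is EF1 with respect to the common valuation $v_k$. Among these parts, let agent $k$ take one of minimum $v_k$-value, and call it $X_k$. The other $n-k$ parts $Z_j$ partition $E_k=E_{k-1}\setminus X_k$ into independent sets of $M|_{E_k}$. Hence $\chi(M_k)\leqslant n-k$, and the invariant carries over. The last agent takes all of $E_{n-1}$, which is independent since $\chi(M_{n-1})\leqslant 1$.

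\textbf{Certificate for agent $k$.} The imaginary allocation is $Y_k=X_k$, with the remaining parts $Z_j$ ($j\neq k$) given to agents $k+1,\ldots,n$. The already-assigned bundles $X_1,\ldots,X_{k-1}$ must also be redistributed among agents $1,\ldots,k-1$ so that agent $k$ does not envy them up to one good.

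For the later parts this is immediate. The partition is EF1 for the common valuation $v_k$, so $v_k(X_k)\geqslant v_k^-(Z_j)$ for every other part $Z_j$.

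\textbf{Main obstacle: the earlier goods.} The earlier goods $E\setminus E_{k-1}$ may be very valuable to agent $k$, and the simple greedy order gives no control over them. I would fix this by not committing to $E_{k-1}$ in advance. Instead, at step $k$, agent $k$ should compare against all of $E$ and choose in a way that also handles the goods taken earlier.

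The cleanest approach I see is to define the imaginary allocation for every agent $i$ directly on the whole of $E$. Let $(W^i_1,\ldots,W^i_n)$ be an EF1 partition of $E$ into $n$ independent sets with respect to $v_i$; it exists since $q=n\geqslant\chi(M)$. Agent $i$ is then satisfied by any bundle $X_i$ that equals some part $W^i_j$ of minimum $v_i$-value, with $Y=W^i$ (suitably reindexed). The difficulty becomes choosing actual bundles $X_1,\ldots,X_n$ that partition $E$, where each $X_i$ is a minimum-value part of an EF1 partition for $v_i$.

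I would achieve this by the sequential procedure, but working inside $E_{i-1}$. The requirement is that agent $i$'s chosen part, together with some partition of $E\setminus E_{i-1}$ into $i-1$ independent sets, forms an EF1 partition of all of $E$ for $v_i$. The candidate partition of $E\setminus E_{i-1}$ is the true bundles $X_1,\ldots,X_{i-1}$; since each is independent, they can serve directly as imaginary bundles. Thus I would require, inductively, that $v_i(X_i)\geqslant v_i^-(X_j)$ for every $j<i$.

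To secure this, I would modify the EF1 partition at step $i$ to favour agent $i$. Starting from an EF1 partition of $E_{i-1}$, agent $i$ takes a part of \emph{maximum} $v_i$-value rather than minimum, provided the parts remaining for the imaginary allocation are still EF1 relative to it. They are, because minimum-value is not needed among the remaining parts: EF1 between any two parts, in the common valuation, already yields $v_i(X_i)\geqslant v_i^-(Z_j)$.

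The hard inequality is comparing $X_i$ with the earlier $X_j$, for which I have no guarantee. I expect to repair it with a swap or exchange argument: if agent $i$ envies some earlier $X_j$ even up to one good, give $X_j$ itself to the imaginary agent $i$. This is a potential-function or envy-cycle style argument, and it is the step I expect to need the most care.
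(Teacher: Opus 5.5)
There is a genuine gap, and it is exactly where you locate it: nothing in your construction controls the goods already committed to earlier agents, and the proposed ``swap or exchange'' repair is not an argument. In fact, no procedure in which agent $1$ commits to a part chosen using $v_1$ alone can work. Take $n=2$, the free structure $(E,2^E)$ (free structures form a hereditary class with the EF1 property), $|E|\geqslant 4$, and $v_1\equiv 1$. Then every EF1 partition for $v_1$ has both parts of size at least $2$. With two agents, agent $2$'s imaginary allocation is forced to be the real one, so she needs genuine EF1 against $X_1$. Whatever part $X_1$ your min or max rule selects, let $v_2$ be $1$ on $X_1$ and $0$ elsewhere. Then $v_2(X_2)=0<|X_1|-1=v_2^-(X_1)$, so your inductive requirement $v_i(X_i)\geqslant v_i^-(X_j)$ for $j<i$ fails. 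Note also that agent $i$'s imaginary allocation must keep $Y_i=X_i$, so ``giving $X_j$ to the imaginary agent $i$'' really means changing the actual allocation. A swap does rescue $n=2$: it becomes cut-and-choose, since both parts of agent $1$'s EF1 partition are acceptable to her. For $n\geqslant 3$, however, the displaced agent $j$ ends up with some other bundle, typically a part of a partition computed for $v_i$ on $E_{i-1}$. Her certificate, built around $X_j$, is destroyed, and you give no reason why such a cascade ends with all certificates intact.

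The missing idea is to arrange that the goods removed from play are ones the remaining agents \emph{reject}, rather than ones chosen for someone else's valuation. The paper proceeds as follows.
\begin{enumerate}
\item A single agent $n$ cuts $E$ into an EF1 partition for $v_n$, so every part lies in $\mathcal{F}^n_n(E)$.
\item Agent $i$ is joined to part $X_j$ when $X_j\in\mathcal{F}^n_i(E)$. Since agent $n$ is adjacent to every part, a Hall-type argument yields a nonempty set $U$ of parts with $|N(U)|=|U|$ and a perfect matching between $U$ and $N(U)$.
\item The remaining $k$ agents recurse on the union $S$ of the unmatched parts, where $\chi(M|_S)\leqslant k$ and heredity applies.
\item Lemma 1 says that if $X\notin\mathcal{F}^{k+1}_i(S\cup X)$, then $v_i(X)<v_i(Y)$ for every $Y\in\mathcal{F}^k_i(S)$. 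Hence a rejected part can be appended to $i$'s imaginary partition, giving $Y\in\mathcal{F}^{k+1}_i(S\cup X)$.
\item Iterating this (Lemma 2) lifts each recursively obtained $Y_i\in\mathcal{F}^k_i(S)$ to $\mathcal{F}^n_i(E)$.
\end{enumerate}
This is precisely the control over ``earlier goods'' that your greedy order lacks.
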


For the graphic matroid of a graph $G$, the parameter $\chi(M)$ is exactly the arboricity of $G$. Since graphic matroids are regular, the known EF1 property for regular matroids cited above implies the common-valuation forest-partition statement mentioned in the abstract whenever the number of agents is at least this arboricity.

The proof is similar to that of an analogous result by Akrami and Rathi \cite{AkramiRathi2025}, who established the existence of epistemic EFX partitions in the unconstrained case for arbitrary monotone valuations. The proof of Theorem \ref{Theorem Epistemic Matroid} is existential, but it has a useful algorithmic reading. This reading should not be understood as a second constructive proof; it merely makes explicit the finite choices used in the induction. It becomes an effective procedure whenever the common-valuation EF1 colorings assumed in the definition of the EF1 property can be found effectively and whenever membership in the auxiliary families $\mathcal{F}_i^k(S)$ can be tested, preferably with witnesses.

Our next result uses the following Hamiltonicity property of matroids. Let us call a matroid $M=(E,\mathcal{I})$ \emph{Hamiltonian} if there exists a cyclic ordering of the set $E$ such that every \emph{segment} of $r(M)$ consecutive elements forms a basis of $M$.

\begin{theorem}\label{Theorem Matroid Hamiltonian}
	Let $M=(E,\mathcal{I})$ be a Hamiltonian matroid with $\chi(M)\leqslant 2$. Suppose that two agents have the same monotone valuation $v$ defined on the subsets of $E$. Then there exists an EF1 partition of $E$ into two independent sets whose sizes differ by at most $1$.
\end{theorem}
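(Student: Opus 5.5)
The plan is to turn the problem into a discrete intermediate-value argument on the Hamiltonian cyclic order, rotating a split of the circle into two arcs. Fix a cyclic ordering $e_0,e_1,\ldots,e_{m-1}$ of $E$, with indices modulo $m=|E|$, in which every $r=r(M)$ consecutive elements form a basis. Since $\chi(M)\leqslant 2$, $E$ is a union of two independent sets, so $m\leqslant 2r$. Put $k=\lfloor m/2\rfloor$. For each $t$ let
\[
A_t=\{e_t,\ldots,e_{t+k-1}\}
\]
and let $B_t=E\setminus A_t$, which is the arc of $m-k=\lceil m/2\rceil$ elements starting at $e_{t+k}$. Both arcs have at most $\lceil m/2\rceil\leqslant r$ consecutive elements. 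Each is therefore contained in a segment of $r$ consecutive elements, which is a basis, so each is independent. Thus every $(A_t,B_t)$ is a feasible bipartition with sizes differing by at most $1$, and it remains to show that one of them is EF1 for the common monotone valuation $v$.

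\textbf{Step 1 (local exchange).} Suppose that for some $t$ we have $v(A_t)\geqslant v(B_t)$ and $v(A_{t+1})\leqslant v(B_{t+1})$. Write $C=A_t\cap A_{t+1}$ and $D=B_t\cap B_{t+1}$. Then
\[
A_t=C\cup\{e_t\},\quad B_t=D\cup\{e_{t+k}\},\quad A_{t+1}=C\cup\{e_{t+k}\},\quad B_{t+1}=D\cup\{e_t\}.
\]
In $(A_t,B_t)$ the holder of $A_t$ does not envy. The holder of $B_t$ is fine, after removing $e_t$ from $A_t$, provided $v(D\cup\{e_{t+k}\})\geqslant v(C)$. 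If this fails, then by monotonicity
\[
v(D)\leqslant v(D\cup\{e_{t+k}\})<v(C)\leqslant v(C\cup\{e_{t+k}\})=v(A_{t+1}).
\]
Hence in $(A_{t+1},B_{t+1})$ the holder of $B_{t+1}$ does not envy, and the holder of $A_{t+1}$ is fine after removing $e_t$ from $B_{t+1}$. So one of the two partitions is EF1. The symmetric case, with the inequalities reversed, is identical.

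\textbf{Step 2 (existence of a sign change).} Set $f(t)=v(A_t)-v(B_t)$; it remains to find $t$ where $f$ changes sign in the sense of Step 1.
\begin{itemize}
\item If $m$ is even, then $A_{t+k}=B_t$, so $f(t+k)=-f(t)$. Hence $f$ takes both signs, weakly, around the cycle, and some consecutive pair $t,t+1$ satisfies the hypothesis of Step 1.
\item If $m=2k+1$ is odd, first handle the case $f(t)<0$ for all $t$. Then $B_t\setminus\{e_{t-1}\}=A_{t+k}$. Since $\gcd(k,2k+1)=1$, the map $t\mapsto t+k$ runs through all residues in a single cycle. Choosing $t$ maximizing $v(A_t)$ gives $v(A_t)\geqslant v(A_{t+k})=v(B_t\setminus\{e_{t-1}\})$, so $(A_t,B_t)$ is EF1: the larger bundle's holder does not envy, because $f(t)<0$.
\item Otherwise some $f(t)\geqslant 0$. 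If also some $f(t')\leqslant 0$, we obtain a consecutive pair as in Step 1. If $f\geqslant 0$ everywhere, any $t$ with $f(t)\geqslant 0$ already makes the holder of $A_t$ happy, and a symmetric argument handles the holder of $B_t$.
\end{itemize}

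The main obstacle is this last boundary case, where the sign of $f$ never changes in the odd case and Step 1 does not apply directly. The cyclic-group trick used for $f<0$ is the tool I would try first on the remaining subcase as well, choosing $t$ so that the required inequality follows from maximality of $v$ over the shifted arcs. Steps 1 and 2 themselves are routine.
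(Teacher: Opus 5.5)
Your set-up, the independence argument and the even case are correct. Step 1 is essentially the paper's exchange between consecutive rotations, and using weak inequalities there neatly absorbs the equitable case. The odd subcase ``$f<0$ everywhere'' is also handled correctly. The coprimality remark there is unnecessary: maximality of $v(A_t)$ over all $t$ already gives $v(A_t)\geqslant v(A_{t+k})$.

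The gap is the last odd subcase. Given your previous bullet, it is really ``$f>0$ everywhere'', and there you only assert that ``a symmetric argument handles the holder of $B_t$'', while conceding you have not found it. The argument is not literally symmetric. The identity $B_t\setminus\{e_{t-1}\}=A_{t+k}$ has no mirror image, because deleting a good from $A_t$ leaves $k-1$ elements, not an arc of size $k+1$. Nothing you wrote compares $v(A_t\setminus\{g\})$ with $v(B_t)$, so as written the odd case is not proved.

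The missing observation is that this subcase is empty. For $m=2k+1$ you have the containments $A_{t+k}\subseteq B_t$ and $A_t\subseteq B_{t-k}=\{e_t,\ldots,e_{t+k}\}$. Choose $t$ with $v(B_t)$ maximal. Then $v(A_t)\leqslant v(B_{t-k})\leqslant v(B_t)$, that is, $f(t)\leqslant 0$. So in the odd case $f$ always takes a nonpositive value, and the only possible configurations are ``$f<0$ everywhere'' (your maximal-arc trick) or a weak sign change (Step 1).

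An equivalent route is where your cyclic idea actually belongs. If $f\geqslant 0$ everywhere, then $v(A_t)\geqslant v(B_t)\geqslant v(A_{t+k})$ for all $t$. Going once around the cycle of $t\mapsto t+k$ forces $v(A_t)=v(A_{t+k})$, and hence $f\equiv 0$.

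With this added, your proof is complete, and it differs from the paper only in the odd case. The paper avoids any sign analysis there. It inserts a dummy good $d$ with $v'(S)=v(S\setminus\{d\})$, applies the even case (where $f(t+k)=-f(t)$ guarantees a sign change), and deletes $d$ afterwards, checking that EF1 survives. The paper's reduction is shorter because it reuses the antipodal antisymmetry. Your direct route avoids the auxiliary good, but it must treat separately the case $f<0$ everywhere, which genuinely occurs (e.g.\ $v(S)=|S|$), and it must rule out $f>0$ everywhere as above.
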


A matroid $M=(E,\mathcal{I})$ with the rank function $r$ is called \emph{uniformly dense} if the following condition is satisfied for every nonempty subset $X\subseteq E$.
\begin{equation}
	\frac{|X|}{r(X)}\leqslant \frac{|E|}{r(E)}. 
\end{equation}
Kajitani, Ueno, and Miyano observed in \cite{KajitaniUenoMiyano1988} that any Hamiltonian matroid must be uniformly dense and made a conjecture that the opposite implication is also true.

\begin{conjecture}[\cite{KajitaniUenoMiyano1988}]\label{Conjecture Hamiltonian Density}
	A matroid $M$ is Hamiltonian if and only if it is uniformly dense.
\end{conjecture}

It is not hard to verify that any matroid $M$ whose ground set can be partitioned into $n$ disjoint bases is uniformly dense. In consequence, Theorem \ref{Theorem Matroid Hamiltonian} and Conjecture \ref{Conjecture Hamiltonian Density} imply the two-agent case of Conjecture \ref{Conjecture Matroid One Valuation} for matroids whose ground set can be partitioned into two bases. It is not clear whether the same holds in greater generality. It is known, however, that any uniformly dense matroid with $\gcd(r(M),|E|)=1$ is Hamiltonian, as proved by van de Heuvel and Thomass\'{e} \cite{vandeHeuvel2012}. In particular, any uniformly dense matroid of rank $r$ and $2r-1$ elements has an EF1 partition. Indeed, by the standard matroid coloring theorem, $\chi(M)=\left\lceil \max_{\emptyset\neq X\subseteq E}|X|/r(X)\right\rceil$. For a uniformly dense matroid of rank $r$ on $2r-1$ elements, this maximum is at most $(2r-1)/r<2$, so $\chi(M)\leqslant 2$.

\section{Proof of Theorem \ref{Theorem Epistemic Matroid}}
Let $\mathcal{M}$ be a hereditary class of independence structures having the EF1 property, let $M=(E,\mathcal{I})$ be a structure from $\mathcal{M}$ with $\chi(M)\leqslant n$, and let $v_i$ be the additive valuation of agent $i\in A$. We use the notation $v_i^-$ introduced in Section~2; thus, for a nonempty bundle $X\subseteq E$, we have
\[
v^-_i(X)=v_i(X\setminus\{g\}),
\]
where $g\in X$ is a good of maximum value for agent $i$ among the goods in $X$, and set $v^-_i(\emptyset)=0$. In other words, $v^-_i(X)$ is the minimum value of $v_i(X')$, where $X'$ is a subset of $X$ with $|X'|=|X|-1$ when $X\neq \emptyset$. Thus the inequality $v_i(X)\geqslant v^-_i(Y)$ means that agent $i$, while receiving bundle $X$, does not envy too much an agent receiving bundle $Y$.

For a subset of goods $S\subseteq E$ and an integer $k\geqslant 1$, every agent $i\in A$ defines a family of bundles $\mathcal{F}^k_i(S)$ as follows. For $k=1$, we put $X\in \mathcal{F}^1_i(S)$ if and only if $X=S$ and $X\in \mathcal{I}$. For $k\geqslant 2$, we put $X\in \mathcal{F}^k_i(S)$ if and only if $X\in \mathcal{I}$ and there is a partition $S=X\cup P_1\cup\cdots \cup P_{k-1}$, with $P_j\in \mathcal{I}$, such that
\[
v_i(X)\geqslant v^-_i(P_j)
\]
for all $j=1,2,\ldots,k-1$. The members of the family $\mathcal{F}^k_i(S)$ will be called \emph{$k$-epistemic} bundles in the subset $S$ for agent $i$. In the case $k=n$ and $S=E$, a partition $E=X_1\cup\cdots \cup X_n$ with $X_i\in \mathcal{F}^n_i(E)$ for all $i$ is exactly what we seek in Theorem \ref{Theorem Epistemic Matroid}.

Our argument can be sketched as follows. Starting from an EF1 partition for a suitable common valuation, we define a bipartite graph $G$ between vertices representing the parts of the partition $(X_1,\ldots,X_n)$ and the set of agents $A=\{1,\ldots,n\}$, joining $i$ to the vertex representing $X_j$ whenever $X_j\in \mathcal{F}^n_i(E)$. If $G$ has a perfect matching, we are done. Otherwise, Corollary \ref{Corollary Hall} yields a matched subfamily of bundles that is already suitable for some agents, and we apply induction to the remaining agents and the remaining goods. Finally, we glue the two resulting allocations into a full solution.

In order to realize this plan, we will need the following key lemma.

\begin{lemma}\label{Lemma k-epistemic}
	Let $1 \leqslant k \leqslant n-1$ and let $S\subseteq E$. Suppose that $X\in \mathcal{I}$, $S\cap X = \emptyset$, and $X\notin \mathcal{F}^{k+1}_i(S\cup X)$ for some $i\in A$. Then, for any $Y\in \mathcal{F}^k_i(S)$, we also have $Y\in \mathcal{F}^{k+1}_i(S\cup X)$. 
\end{lemma}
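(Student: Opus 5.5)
Since $Y\in\mathcal{F}^k_i(S)$, fix a witnessing partition $S=Y\cup P_1\cup\cdots\cup P_{k-1}$ with $Y,P_j\in\mathcal{I}$ and $v_i(Y)\geqslant v^-_i(P_j)$ for all $j$; when $k=1$ this is just $Y=S$ with no sets $P_j$. Because $S\cap X=\emptyset$, the sets $Y,P_1,\ldots,P_{k-1},X$ partition $S\cup X$ into $k+1$ independent sets, and $Y\in\mathcal{I}$. Thus the only thing missing for $Y\in\mathcal{F}^{k+1}_i(S\cup X)$ is the inequality $v_i(Y)\geqslant v^-_i(X)$ for the new part $X$. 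The plan is to take exactly this partition as the witness and obtain that inequality by contradiction from the hypothesis $X\notin\mathcal{F}^{k+1}_i(S\cup X)$.

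Suppose instead that $v_i(Y)<v^-_i(X)$. Since $v^-_i(X)\leqslant v_i(X)$, this gives $v_i(X)>v_i(Y)\geqslant v^-_i(Y)$. For every $j$ we also have $v_i(X)>v_i(Y)\geqslant v^-_i(P_j)$. Now consider the same partition $S\cup X=X\cup Y\cup P_1\cup\cdots\cup P_{k-1}$, but with $X$ as the distinguished bundle. It has $k$ further parts, namely $Y,P_1,\ldots,P_{k-1}$, all independent, and $X\in\mathcal{I}$. The inequalities just derived say that $v_i(X)\geqslant v^-_i(Q)$ for each of these parts $Q$, so $X\in\mathcal{F}^{k+1}_i(S\cup X)$. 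This contradicts the hypothesis, hence $v_i(Y)\geqslant v^-_i(X)$ and the lemma follows.

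The only care needed is bookkeeping. In the definition of $\mathcal{F}^{k+1}_i$ the partition has exactly $k$ parts besides the distinguished bundle, and empty parts are allowed, so the case $k=1$ goes through uniformly. I expect no real obstacle. The key step is noticing that the contrapositive reuses the same partition with the roles of $X$ and $Y$ swapped, together with the trivial bound $v^-_i(X)\leqslant v_i(X)$.
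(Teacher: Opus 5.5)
Your proof is correct and is essentially the paper's argument: both reuse the witnessing partition $S\cup X=X\cup Y\cup P_1\cup\cdots\cup P_{k-1}$ with $X$ taken as the distinguished bundle, which contradicts $X\notin\mathcal{F}^{k+1}_i(S\cup X)$. The only cosmetic difference is the contradiction hypothesis. The paper refutes $v_i(X)\geqslant v_i(Y)$ to obtain the stronger intermediate fact $v_i(X)<v_i(Y)$, whereas you refute $v_i(Y)<v^-_i(X)$ directly.
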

\begin{proof}
	Let $S=Y\cup P_1\cup \cdots \cup P_{k-1}$ be a partition witnessing that $Y\in \mathcal{F}^k_i(S)$. Then we have $v_i(Y)\geqslant v^-_i(P_j)$ for all $j=1,2,\ldots,k-1$. If $v_i(X)\geqslant v_i(Y)$, then $v_i(X)\geqslant v^-_i(P_j)$ for all $j=1,2,\ldots,k-1$. Since $v^-_i(Y)\leqslant v_i(Y)$, the inequality $v_i(X)\geqslant v_i(Y)$ also gives $v_i(X)\geqslant v^-_i(Y)$. Hence the partition
	\[
	S\cup X=X\cup Y\cup P_1\cup \cdots \cup P_{k-1}
	\]
	implies that $X\in \mathcal{F}^{k+1}_i(S\cup X)$, a contradiction. Therefore $v_i(X)<v_i(Y)$. Since valuations are nonnegative and additive, $v^-_i(X)\leqslant v_i(X)<v_i(Y)$. Together with the inequalities $v_i(Y)\geqslant v^-_i(P_j)$, the same partition
	\[
	S\cup X=X\cup Y\cup P_1\cup \cdots \cup P_{k-1}
	\]
	shows that $Y\in \mathcal{F}^{k+1}_i(S\cup X)$.
\end{proof}
This lemma can be easily generalized to the following form which allows for the above mentioned gluing of matchings.

\begin{lemma}\label{Lemma Gluing Matchings}
	Let $1 \leqslant k \leqslant n-1$ and let $E=S\cup T$ be a partition. Suppose that $T=X_{k+1}\cup \cdots \cup X_n$ is a partition of $T$ into subsets from $\mathcal{I}$ such that, for some $i\in A$, we have $X_j\notin \mathcal{F}^{n}_i(E)$ for all $j=k+1,\ldots,n$. Then, for any $Y\in \mathcal{F}^k_i(S)$, we also have $Y\in \mathcal{F}^n_i(E)$.
\end{lemma}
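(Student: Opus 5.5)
The plan is to argue directly, as in the proof of Lemma~\ref{Lemma k-epistemic}, rather than by iterating that lemma $n-k$ times. Iterating would require knowing $X_j\notin \mathcal{F}^{l+1}_i(S\cup X_{k+1}\cup\cdots\cup X_l)$ for intermediate $l$. The hypothesis $X_j\notin\mathcal{F}^n_i(E)$ does not obviously give this, because a witness partition on a smaller set need not extend to $E$. So I will build one global partition of $E$ and compare $Y$ with the heaviest of the bundles $X_{k+1},\ldots,X_n$.

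First I fix a partition $S=Y\cup P_1\cup\cdots\cup P_{k-1}$ with all $P_l\in\mathcal{I}$ and $v_i(Y)\geqslant v^-_i(P_l)$ for every $l$, which witnesses $Y\in\mathcal{F}^k_i(S)$. When $k=1$ this is just $Y=S\in\mathcal{I}$ with no $P_l$'s. I then consider the partition
\[
E=Y\cup P_1\cup\cdots\cup P_{k-1}\cup X_{k+1}\cup\cdots\cup X_n .
\]
It has exactly $1+(k-1)+(n-k)=n$ parts, all of them in $\mathcal{I}$. Next I choose an index $j\in\{k+1,\ldots,n\}$ maximizing $v_i(X_j)$.

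Suppose first that $v_i(X_j)\geqslant v_i(Y)$. Using the displayed partition with $X_j$ as the distinguished bundle, I check the required inequalities against every other part:
\begin{itemize}
\item $v_i(X_j)\geqslant v_i(X_m)\geqslant v^-_i(X_m)$ for every $m\neq j$, by maximality of $j$ and $v^-_i\leqslant v_i$;
\item $v_i(X_j)\geqslant v_i(Y)\geqslant v^-_i(Y)$;
\item $v_i(X_j)\geqslant v_i(Y)\geqslant v^-_i(P_l)$ for every $l$.
\end{itemize}
This shows $X_j\in\mathcal{F}^n_i(E)$, contradicting the hypothesis.

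Hence $v_i(Y)>v_i(X_j)\geqslant v_i(X_m)\geqslant v^-_i(X_m)$ for all $m\in\{k+1,\ldots,n\}$. Together with $v_i(Y)\geqslant v^-_i(P_l)$, the same displayed partition, now with $Y$ as the distinguished bundle, witnesses $Y\in\mathcal{F}^n_i(E)$.

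The only real obstacle is choosing which bundle to test against the hypothesis. Taking the $v_i$-maximal $X_j$ is what makes the contradiction go through: it dominates all the other $X_m$ simultaneously, which is exactly the point where a step-by-step use of Lemma~\ref{Lemma k-epistemic} would break down. The remaining checks use only nonnegativity and additivity, through $v^-_i\leqslant v_i$.
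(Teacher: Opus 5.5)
Your proof is correct, but it is organized differently from the paper's. The paper argues by induction on $n-k$. It removes the last bundle $X_n$ and uses Lemma~\ref{Lemma k-epistemic} in contrapositive form (with $Y=X_j$, $X=X_n$, $S=E\setminus X_n$) to deduce $X_j\notin\mathcal{F}^{n-1}_i(E\setminus X_n)$ for $k+1\leqslant j\leqslant n-1$. It then applies the induction hypothesis on the smaller ground set $E\setminus X_n$ to get $Y\in\mathcal{F}^{n-1}_i(E\setminus X_n)$. One more application of Lemma~\ref{Lemma k-epistemic} lifts $Y$ to $\mathcal{F}^n_i(E)$.

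In particular, your side remark that a step-by-step use of Lemma~\ref{Lemma k-epistemic} would break down is not right. The intermediate non-memberships you were worried about do follow from Lemma~\ref{Lemma k-epistemic} itself, because each bundle added back is non-epistemic on the larger set, so witness partitions do extend.

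Your argument instead runs the dichotomy of Lemma~\ref{Lemma k-epistemic} once, against a $v_i$-heaviest $X_j$, on the single global partition $Y\cup P_1\cup\cdots\cup P_{k-1}\cup X_{k+1}\cup\cdots\cup X_n$. This is also the witness one obtains by unwinding the paper's induction. Your route gives a self-contained, non-inductive proof. It avoids restating the lemma for a smaller ground set and fewer parts, which the paper's induction uses tacitly. It also shows the hypothesis is only needed for one $v_i$-heaviest bundle among $X_{k+1},\ldots,X_n$. The paper's route, by contrast, is modular: it uses Lemma~\ref{Lemma k-epistemic} as a black box and needs no new valuation estimates.
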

\begin{proof}
	Suppose that $Y\in \mathcal{F}^k_i(S)$. The reasoning is by induction on the number $n-k$ of parts in the partition of $T$. If $n-k=1$, then we get the assertion directly from Lemma \ref{Lemma k-epistemic}, by putting $X=X_n$. Assume now that $n-k\geqslant 2$ and the assertion holds when the number of parts in the partition of $T$ is strictly smaller than $n-k$.
	
	First notice that each part $X_j$ from the partition of $T$, with $k+1 \leqslant j \leqslant n-1$, must satisfy $X_j\notin \mathcal{F}^{n-1}_i(E\setminus X_n)$. Indeed, suppose on the contrary that $X_j\in \mathcal{F}^{n-1}_i(E\setminus X_n)$ and put $k=n-1$, $Y=X_j$, $X=X_n$, and $S=E\setminus X_n$ in Lemma \ref{Lemma k-epistemic}. Then, we would get that $X_j\in \mathcal{F}^n_i(E)$, contrary to the assumption. Therefore, we may delete the last part $X_n$ and consider the partition $E\setminus X_n=S\cup T'$, where $T'=X_{k+1}\cup \cdots \cup X_{n-1}$, which still satisfies the assumptions of the lemma.
	
	Now, by the inductive assumption we have $Y\in \mathcal{F}^{n-1}_i(E\setminus X_n)$. Finally, applying Lemma \ref{Lemma k-epistemic} once again, we get $Y\in \mathcal{F}^{n}_i(E)$. 
	
\end{proof}

This lemma allows for stating the key gluing property which allows for an inductive argument in the proof of Theorem \ref{Theorem Epistemic Matroid}.

\begin{corollary}\label{Corollary Gluing Partitions}
	Let $M=(E,\mathcal{I})$ be an independence structure and let $E=S\cup T$ be a partition such that $S=Y_1\cup \cdots \cup Y_k$ and $T=X_{k+1}\cup \cdots \cup X_n$, where $Y_i,X_j\in \mathcal{I}$. Moreover, suppose that $Y_i\in \mathcal{F}^{k}_i(S)$, $X_j\in \mathcal{F}^{n}_j(E)$, and $X_j\notin \mathcal{F}^{n}_i(E)$ for all $i=1,2,\ldots,k$ and $j=k+1,\ldots,n$. Then $(Y_1,\ldots,Y_k,X_{k+1},\ldots,X_n)$ is an epistemic EF1 allocation of independent sets of $M$.
\end{corollary}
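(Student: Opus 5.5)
The plan is to check the definition of epistemic EF1 agent by agent, splitting the agents into two groups, $j\in\{k+1,\ldots,n\}$ and $i\in\{1,\ldots,k\}$. Throughout, all bundles $Y_1,\ldots,Y_k,X_{k+1},\ldots,X_n$ are pairwise disjoint members of $\mathcal{I}$ whose union is $S\cup T=E$. So the tuple is a feasible partition of $E$ into $n$ independent sets of $M$. As noted after the definition of $\mathcal{F}^k_i(S)$, it then suffices to show that every agent's bundle lies in that agent's family $\mathcal{F}^n(E)$. This is because membership $Z\in\mathcal{F}^n_\ell(E)$ provides a partition $E=Z\cup P_1\cup\cdots\cup P_{n-1}$ into independent sets with $v_\ell(Z)\geqslant v^-_\ell(P_t)$ for all $t$. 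That partition, with $Z$ placed in position $\ell$ and the $P_t$ in the other positions, is exactly an imaginary allocation as required in the definition of epistemic EF1.

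For an agent $j\in\{k+1,\ldots,n\}$, nothing needs to be done. The hypothesis $X_j\in\mathcal{F}^n_j(E)$ directly provides the witness partition, and the observation above turns it into the imaginary allocation for agent $j$.

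For an agent $i\in\{1,\ldots,k\}$, I would invoke Lemma \ref{Lemma Gluing Matchings} with the same $k$, the partition $E=S\cup T$, and the partition $T=X_{k+1}\cup\cdots\cup X_n$ into sets from $\mathcal{I}$. Its hypothesis, $X_j\notin\mathcal{F}^n_i(E)$ for all $j=k+1,\ldots,n$, is exactly the assumption in the corollary for this $i$. Since $Y_i\in\mathcal{F}^k_i(S)$, the lemma yields $Y_i\in\mathcal{F}^n_i(E)$, and again we obtain the imaginary allocation for agent $i$.

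The only points needing care are bookkeeping ones. First, the lemma requires $1\leqslant k\leqslant n-1$. The degenerate cases are immediate: if $k=0$ all agents fall into the second group, and if $k=n$ then $S=E$ and $\mathcal{F}^n_i(S)=\mathcal{F}^n_i(E)$ directly. Second, the witness partitions may contain empty parts. This is allowed, since partitions were declared to be possibly empty ordered tuples and $\emptyset\in\mathcal{I}$ whenever $\mathcal{I}$ is nonempty. I expect no real obstacle here; the substantive work was already done in Lemma \ref{Lemma Gluing Matchings}.
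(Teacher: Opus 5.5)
Your proposal is correct and matches the paper's proof: the paper likewise applies Lemma \ref{Lemma Gluing Matchings} to each $Y_i$ to obtain $Y_i\in\mathcal{F}^n_i(E)$, and takes $X_j\in\mathcal{F}^n_j(E)$ directly from the hypothesis, so every bundle is $n$-epistemic for its owner. Your extra remarks are sound bookkeeping that the paper leaves implicit: the degenerate cases $k=0$ and $k=n$, and unpacking membership in $\mathcal{F}^n_\ell(E)$ into an imaginary allocation.
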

\begin{proof}
	Applying Lemma \ref{Lemma Gluing Matchings} to each part $Y_i$ gives that $Y_i\in \mathcal{F}^{n}_i(E)$. Each part of the partition is therefore an epistemic bundle for the corresponding agent.
\end{proof}

We use Hall's theorem in the following form. Recall that for a bipartite graph with vertex classes $A$ and $B$, and for $U\subseteq B$, we denote by $N(U)\subseteq A$ the set of vertices of $A$ adjacent to at least one vertex of $U$.

\begin{lemma}\label{Lemma Hall's Theorem}
	
Let $G$ be a bipartite graph on two sets of vertices, $A=\{1,2,\ldots, n\}$ and $B=\{b_1,b_2,\ldots, b_n\}$. There exists a perfect matching in $G$ if and only if for each subset $U\subseteq B$,
$$
|N(U)|\geqslant |U|.
$$
\end{lemma}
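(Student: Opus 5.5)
Necessity is immediate, so the plan is essentially to prove sufficiency, and I would do it by induction on $n$ in the standard way. For necessity: if $G$ has a perfect matching $\mu$, then for every $U\subseteq B$ the vertices $\mu(U)\subseteq A$ are pairwise distinct and adjacent to $U$. Hence $|N(U)|\geqslant|\mu(U)|=|U|$.

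For sufficiency, assume Hall's condition holds. The case $n=1$ is trivial, since $|N(\{b_1\})|\geqslant 1$ gives an edge. For $n\geqslant 2$ I split into two cases according to whether Hall's condition holds with slack on proper subsets.

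\emph{Case 1: every nonempty proper subset $U\subsetneq B$ satisfies $|N(U)|\geqslant |U|+1$.} Pick any edge $b_n a$ (one exists because $|N(\{b_n\})|\geqslant1$). Delete $b_n$ and $a$. In the remaining graph on $A\setminus\{a\}$ and $B\setminus\{b_n\}$, each $U\subseteq B\setminus\{b_n\}$ loses at most one neighbour, so $|N'(U)|\geqslant|U|$. By induction there is a perfect matching of the smaller graph, and adding the edge $b_na$ completes it.

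\emph{Case 2: some nonempty proper $U_0\subsetneq B$ has $|N(U_0)|=|U_0|$.} Split $G$ into two smaller bipartite graphs.
\begin{itemize}
\item The first graph $G_1$ is induced on $U_0$ and $N(U_0)$, which have equal sizes. Hall's condition holds in $G_1$ because every neighbour of a subset of $U_0$ already lies in $N(U_0)$. Induction gives a perfect matching of $G_1$.
\item The second graph $G_2$ is induced on $B\setminus U_0$ and $A\setminus N(U_0)$, again of equal sizes. For $W\subseteq B\setminus U_0$ we have
\[
|N(W)\setminus N(U_0)| = |N(W\cup U_0)|-|N(U_0)| \geqslant |W|+|U_0|-|U_0| = |W|,
\]
so Hall's condition holds in $G_2$ and induction gives a perfect matching of $G_2$.
\end{itemize}
The union of the two matchings is a perfect matching of $G$.

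The only step with any content is the inequality in Case 2. It rests on the identity $N(W\cup U_0)=N(W)\cup N(U_0)$, which gives $|N(W\cup U_0)|=|N(U_0)|+|N(W)\setminus N(U_0)|$, together with the tight equality $|N(U_0)|=|U_0|$.

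One bookkeeping point: the lemma is stated with both sides labelled by $\{1,\dots,n\}$, but the induction must run over balanced bipartite graphs with arbitrary vertex sets of equal size. I therefore prove that slightly more general statement, which is the same argument with the labels dropped, and the lemma is its special case.
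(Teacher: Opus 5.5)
Your proof is correct. It is the standard Halmos--Vaughan induction, splitting according to whether some nonempty proper $U_0\subsetneq B$ is tight, and the two cases are exhaustive. Both cases check out, including the count $|N(W)\setminus N(U_0)|=|N(W\cup U_0)|-|N(U_0)|\geqslant |W|$ in Case 2. Strictly, the induction is strong induction, since the two pieces in Case 2 have sizes $|U_0|$ and $n-|U_0|$, both between $1$ and $n-1$.

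The paper gives no proof of this lemma: it simply quotes Hall's theorem as a classical result. So there is no competing argument to compare against, and your write-up supplies what the paper takes for granted. One connection is worth noticing. Your treatment of $G_1$ is exactly the reasoning the paper uses in the proof of Corollary~\ref{Corollary Hall} to get a perfect matching in $G_U$. The key point there is that every neighbour of a subset of a tight set $U_0$ already lies in $N(U_0)$, so Hall's condition passes to $G[U_0\cup N(U_0)]$.
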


For the proof of Theorem \ref{Theorem Epistemic Matroid} we need the following simple consequence of Hall's theorem.

\begin{corollary}\label{Corollary Hall}
Let $G$ be a non-empty bipartite graph on two sets of vertices, $A=\{1,2,\ldots,n\}$ and $B=\{b_1,b_2,\ldots,b_n\}$, such that some vertex $i_0\in A$ is adjacent to every vertex of $B$. Then there exists a non-empty subset $U\subseteq B$ such that $|N(U)|=|U|$. Moreover, there exists a perfect matching $H$ in the subgraph $G_U=G[U\cup N(U)]$ of $G$ induced by the vertex set $U\cup N(U)$.
\end{corollary}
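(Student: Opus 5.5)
The plan is to derive the corollary from Lemma \ref{Lemma Hall's Theorem} by a minimality argument. The hypothesis that $i_0$ is adjacent to every vertex of $B$ gives $N(U)\ni i_0$ for every non-empty $U\subseteq B$, so $|N(U)|\geqslant 1$ for all such $U$.

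\emph{Existence of $U$.} We split into two cases according to whether $G$ has a perfect matching.

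\begin{itemize}
\item If $G$ has a perfect matching, take $U=B$. Every vertex of $A$ is matched to a vertex of $B$, so $N(B)=A$. Hence $|N(U)|=n=|U|$.
\item If $G$ has no perfect matching, Lemma \ref{Lemma Hall's Theorem} provides a set $W\subseteq B$ with $|N(W)|<|W|$. Since $|N(\{b\})|\geqslant 1$ for every single vertex $b$, and every non-empty set has a neighbourhood of size at least one, we may consider the \emph{deficiency} $d(U)=|U|-|N(U)|$. Choose a non-empty $U$ that maximizes $d(U)$, and among those one of minimal size. I expect this choice does not directly yield equality, so a different extremal choice is needed.
\end{itemize}

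The cleaner route in the second case is to choose $U$ non-empty and minimal with respect to inclusion among the sets satisfying $|N(U)|\leqslant |U|$. Such sets exist, for instance $W$.

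\begin{itemize}
\item \emph{The inequality is an equality.} Suppose $|N(U)|<|U|$. Then $|U|\geqslant 2$, because a singleton has a neighbourhood of size at least $1$. Remove any $b\in U$ to get $U'=U\setminus\{b\}$, which is non-empty. Then $|N(U')|\leqslant |N(U)|\leqslant |U|-1=|U'|$, contradicting the minimality of $U$. Hence $|N(U)|=|U|$.
\item \emph{Hall's condition holds on $G_U$.} For every non-empty proper subset $U'\subsetneq U$, minimality gives $|N(U')|>|U'|$. Note that $N(U')\subseteq N(U)$, so neighbourhoods computed in $G_U$ agree with those computed in $G$.
\end{itemize}

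\emph{Perfect matching in $G_U$.} The graph $G_U$ is bipartite with both sides of size $|U|=|N(U)|$. By the previous step, every subset $U'\subseteq U$ satisfies $|N_{G_U}(U')|\geqslant |U'|$: the full set $U$ gives equality, and the empty set is trivial. Lemma \ref{Lemma Hall's Theorem}, applied after relabelling both sides as sets of size $|U|$, therefore gives a perfect matching $H$ of $G_U$.

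For uniformity, the perfect-matching case can also be handled by the minimal-set argument. Pick $U$ inclusion-minimal among the non-empty sets with $|N(U)|\leqslant|U|$; the set $B$ is always such a set. The same reasoning then applies verbatim, so no case split is needed.

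The only delicate point is ensuring that equality, rather than a strict inequality, holds for the chosen $U$. This is exactly where the hypothesis on $i_0$ is used: it makes singletons have neighbourhoods of size at least one, so that shrinking a deficient set never reaches the empty set.
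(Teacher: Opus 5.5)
Your proof is correct and is essentially the paper's argument. It uses the same three ingredients: an inclusion-minimal choice, the hypothesis on $i_0$ to guarantee that singletons have non-empty neighbourhoods, and Hall's theorem applied to $G_U$ (using $N_{G_U}(U')=N(U')$ for $U'\subseteq U$). The paper takes a minimal strictly deficient $Q$ and sets $U=Q\setminus\{b\}$, whereas you take $U$ minimal among non-empty sets with $|N(U)|\leqslant|U|$; since $B$ is always a candidate, this gives equality directly and removes the case split. You should delete the abandoned deficiency-maximization detour.
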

\begin{proof}
	If $G$ has a perfect matching, then clearly $A=N(B)$ and we may take $U=B$. If there is no perfect matching in $G$, then there must be a subset $Q$ of $B$ violating Hall's condition, that is, $|N(Q)|\leqslant |Q|-1$. We may assume that $Q$ is minimal with respect to inclusion. Since $i_0\in N(Q)$, we have $Q\neq \emptyset$, and therefore $|Q|\geqslant 2$. Let $U$ be any non-empty subset of $Q$ with $|U|=|Q|-1$. By the minimality of $Q$, the set $U$ cannot violate Hall's condition, so
	\[
	|U|\leqslant |N(U)|\leqslant |N(Q)|\leqslant |Q|-1=|U|.
	\]
	It follows that $|N(U)|=|U|$.
	
	Since $Q$ is a minimal set violating Hall's condition in the graph $G$, every subset $T$ of $U$ must satisfy it (in $G$). Since $N(T)\subseteq N(U)$, Hall's condition for $T$ is also satisfied in the graph $G_U=G[U\cup N(U)]$. Thus, there must be a perfect matching in $G_U$.
\end{proof}

Now we are ready for the proof of Theorem \ref{Theorem Epistemic Matroid}.

\begin{proof}[Proof of Theorem \ref{Theorem Epistemic Matroid}]
	We argue by induction on the number of agents $n$. For $n=1$, the inequality $\chi(M)\leqslant 1$ implies that $E$ itself is independent, so the unique allocation $(E)$ is trivially epistemic EF1.
	
	Assume now that $n\geqslant 2$ and that the statement holds for all smaller numbers of agents. Since $\mathcal{M}$ has the EF1 property, and since the EF1 property is assumed for every additive valuation, we may apply it to the common valuation $v_n$. Thus there exists an EF1 partition $(X_1,\ldots,X_n)$ of $E$ into independent sets for the instance in which all agents have the common additive valuation $v_n$. Let $B=\{b_1,\ldots,b_n\}$, where $b_j$ is a formal vertex representing the bundle $X_j$. Let $G$ be the bipartite graph with bipartition classes $A=\{1,\ldots,n\}$ and $B$, in which $b_j$ is joined by an edge to $i$ whenever $X_j$ is an $n$-epistemic bundle for agent $i$ in the whole set $E$, that is, when $X_j\in \mathcal{F}^n_i(E)$.
	
	For every pair of bundles $X_a,X_b$, the initial allocation is EF1 with respect to the common valuation $v_n$, and therefore
	\[
	v_n(X_a)\geqslant v^-_n(X_b).
	\]
	Hence, for each $a$, the same partition $(X_1,\ldots,X_n)$ witnesses that $X_a\in \mathcal{F}^n_n(E)$. Consequently, agent $n$ is adjacent to every vertex of $B$.
	
	Let $U\subseteq B$ be a set satisfying the properties guaranteed by Corollary \ref{Corollary Hall}, that is, $|U|=|N(U)|$ and there is a perfect matching $F$ between $U$ and $N(U)$. If $U=B$, then $F$ is a perfect matching of the whole graph $G$. In that case each agent is matched to an $n$-epistemic bundle, and the resulting allocation is epistemic EF1. Thus we may assume that $U$ is a non-empty proper subset of $B$.
	
	Since agent $n$ is adjacent to every vertex of $B$, we have $n\in N(U)$. After possibly renumbering the bundles and the agents other than $n$, we may write $U$ as a set of bundle vertices:
	\[
	U=\{b_{k+1},\ldots,b_n\}\qquad\text{and}\qquad N(U)=\{k+1,\ldots,n\},
	\]
	where $1\leqslant k\leqslant n-1$, and the matching $F$ consists of the edges $(j,b_j)$ for $j=k+1,\ldots,n$. Let us additionally denote $A_1=\{1,2,\ldots,k\}$ and $A_2=\{k+1,\ldots,n\}$.
	
	By construction, every agent in $A_2$ is matched to a vertex representing a bundle from $\mathcal{F}^n_j(E)$. Moreover, any agent $i\in A_1$ is not adjacent to any vertex in $U$, since $i\notin N(U)$. In other words, we have $X_j\notin \mathcal{F}^n_i(E)$ for every $i\in A_1$ and every $j\in A_2$.
	
	Now let
	\[
	S=X_1\cup \cdots \cup X_k\qquad\text{and}\qquad T=X_{k+1}\cup\cdots \cup X_n.
	\]
	Consider the restriction $M|_S$ of $M$ to the set $S$. Since $X_1,\ldots,X_k$ are independent and partition $S$, we have $\chi(M|_S)\leqslant k$. Because $\mathcal{M}$ is hereditary, we also have $M|_S\in \mathcal{M}$. Therefore, the induction hypothesis applies to $M|_S$ and the agents in $A_1$, yielding a partition
	\[
	S=Y_1\cup\cdots \cup Y_k
	\]
	that is epistemic EF1 within the structure $M|_S$. Equivalently, $Y_i\in \mathcal{F}^k_i(S)$ for every $i=1,\ldots,k$. Since all parts $Y_i$ are independent in $M|_S$, they are also independent in $M$.
	
	Finally, Corollary \ref{Corollary Gluing Partitions} applied to the partition
	\[
	E=Y_1\cup\cdots \cup Y_k\cup X_{k+1}\cup\cdots \cup X_n
	\]
	shows that this allocation is epistemic EF1 in $M$.
\end{proof}

\noindent\textbf{Algorithmic reading of Theorem~\ref{Theorem Epistemic Matroid}.}
We present the recursive construction implicit in the proof. The purpose of the pseudocode is only to identify the choices made by the inductive argument. It uses the following two black boxes.

\begin{itemize}
    \item \textsc{EF1Color}$(M|_S,q,w)$ returns an EF1 partition of $S$ into $q$ independent sets of the induced structure $M|_S$, for the common additive valuation $w$, whenever $M|_S\in\mathcal{M}$ and $\chi(M|_S)\leqslant q$.
    \item \textsc{IsEpiBundle}$(i,X,S,q)$ decides whether $X\in\mathcal{F}_i^q(S)$ and, when the answer is positive, may return a witnessing partition of $S\setminus X$.
\end{itemize}

Thus the pseudocode is constructive exactly to the extent that these two tasks are constructive. In particular, no additional existence assertion is hidden in the algorithm beyond the EF1-colorability assumption and the membership tests for the families $\mathcal{F}_i^q(S)$. We write \textsc{BuildEpiEF1Partition} for the recursive routine below. The name is only mnemonic: it records the inductive construction used in the proof, and is not meant as a separate algorithmic theorem.

\begin{algorithm}[H]
\SetAlgoLined
\SetKwFunction{EFColor}{EF1Color}
\SetKwFunction{IsEpiBundle}{IsEpiBundle}
\SetKwFunction{Alg}{BuildEpiEF1Partition}
\KwIn{An induced substructure $M|_S=(S,\mathcal I_S)\in\mathcal{M}$ with $\chi(M|_S)\leqslant q$, a non-empty set $A'\subseteq A$ of $q$ agents, and the additive valuations $(v_i)_{i\in A'}$.}
\KwOut{A partition $(Z_i)_{i\in A'}$ of $S$ into independent sets of $M|_S$ such that $Z_i\in\mathcal{F}_i^q(S)$ for every $i\in A'$.}
\If{$q=1$}{
    let $i$ be the unique agent in $A'$, set $Z_i\leftarrow S$, and \Return $(Z_i)_{i\in A'}$\;
}
Choose an agent $p\in A'$\;
$(X_1,\ldots,X_q)\leftarrow$ \EFColor{$M|_S,q,v_p$}\tcp*[r]{common valuation $v_p$}
Let $B=\{b_1,\ldots,b_q\}$ be vertices representing $X_1,\ldots,X_q$\;
Construct the bipartite graph $G$ with left side $A'$ and right side $B$\;
\ForEach{$i\in A'$}{
    \For{$j=1$ \KwTo $q$}{
        \If{\IsEpiBundle{$i,X_j,S,q$}}{
            add the edge $(i,b_j)$ to $G$\;
        }
    }
}
\If{$G$ has a perfect matching $F$}{
    set $Z_i\leftarrow X_j$ whenever $(i,b_j)\in F$\;
    \Return $(Z_i)_{i\in A'}$\;
}
Find an inclusion-minimal Hall-deficient set $Q\subseteq B$, so $|N(Q)|\leqslant |Q|-1$\;
Choose $b^*\in Q$ and set $U\leftarrow Q\setminus\{b^*\}$\tcp*[r]{$U\neq\emptyset$ and $|N(U)|=|U|$}
Find a perfect matching $F$ in the induced graph $G[U\cup N(U)]$\;
Set $A_2\leftarrow N(U)$ and $A_1\leftarrow A'\setminus A_2$\;
Set $S_1\leftarrow\displaystyle\bigcup_{b_j\in B\setminus U}X_j$\;
For every $i\in A_2$, set $Z_i\leftarrow X_j$ whenever $(i,b_j)\in F$\;
$(Z_i)_{i\in A_1}\leftarrow$ \Alg{$M|_{S_1},A_1,(v_i)_{i\in A_1}$}\;
\Return $(Z_i)_{i\in A'}$\;
\caption{\textsc{BuildEpiEF1Partition}: recursive reading of the proof of Theorem~\ref{Theorem Epistemic Matroid}}\label{alg:epistemic-ef1}
\end{algorithm}

The top-level call is \textsc{BuildEpiEF1Partition}$(M,A,(v_i)_{i\in A})$. In a call with $q$ agents, the non-oracle work consists of $q^2$ membership tests for the relation $X\in\mathcal{F}_i^q(S)$ and standard bipartite matching or Hall-set computations. Since the number of agents strictly decreases in every recursive call, there are at most $n$ calls. Hence, if \textsc{EF1Color} and \textsc{IsEpiBundle} are polynomial-time routines for the class under consideration, then the recursive construction is polynomial-time as well.

\noindent\textbf{EXAMPLE (graphic matroid of $K_6$).}
Let the vertex set of $K_6$ be $\{1,2,3,4,5,6\}$ and let the goods be its edges:
\[
E = \{(i,j) : 1 \leqslant i < j \leqslant 6\}.
\]

We consider three agents $A'=\{1,2,3\}$ with strictly positive additive valuations:
\begin{align*}
v_1(e)&=1 \quad \text{for every } e\in E,\\
v_2(e)&=
\begin{cases}
3 & \text{if } e = (i,i+1) : 1 \leqslant i \leqslant 5,\\
1 & \text{otherwise},
\end{cases}\\
v_3(e)&=
\begin{cases}
5 & \text{if } e = (i,i+1) : 1 \leqslant i \leqslant 5,\\
1 & \text{otherwise}.
\end{cases}
\end{align*}

\medskip
\noindent
\textbf{EF1 partition for $v_1$.}
Since $v_1$ is uniform, we partition $E$ into three edge-disjoint spanning trees:
\begin{align*}
X_1 &= \{(1,2),(2,3),(3,4),(4,5),(5,6)\},\\
X_2 &= \{(1,3),(3,5),(2,5),(2,4),(4,6)\},\\
X_3 &= \{(1,4),(1,5),(1,6),(2,6),(3,6)\}.
\end{align*}
Each $X_j$ is a tree, hence independent, and $(X_1,X_2,X_3)$ is EF1 for agent $1$.

\medskip
\noindent
\textbf{Values of bundles.} We have:
\begin{align*}
v_1(X_1)&=1+1+1+1+1=5,\\
v_1(X_2)&=1+1+1+1+1=5,\\
v_1(X_3)&=1+1+1+1+1=5,
\end{align*}
\begin{align*}
v_2(X_1)&=3+3+3+3+3=15,\\
v_2(X_2)&=1+1+1+1+1=5,\\
v_2(X_3)&=1+1+1+1+1=5,
\end{align*}
\begin{align*}
v_3(X_1)&=5+5+5+5+5=25,\\
v_3(X_2)&=1+1+1+1+1=5,\\
v_3(X_3)&=1+1+1+1+1=5.
\end{align*}

\medskip
\noindent
\textbf{Acceptable bundles.} Every bundle is acceptable for agent 1 (even without mixing the two other bundles), i.e., $X_1,X_2,X_3\in \mathcal{F}_1^{3}(E)$. On the other hand, only $X_1$ is acceptable for agent $2$. Without loss of generality, consider the set $X_2$. In any partition of $X_1\cup X_3$ into two independent sets $X_1^\prime$ and $X_3^\prime$ we must have $|X_1^\prime|=|X_3^\prime|=5$ and for one of those two sets $X^\prime\in \{X_1^\prime,X_3^\prime\}$, we must have $|X_1\cap X^\prime| \geq 3$, which implies $v_2(X^\prime)\geqslant 3+3+3+1+1 = 11$ and $v_2^-(X^\prime)\geqslant 3+3+1+1 = 8 > v_2(X_2)$, so no EF1-partition exists. For the same reason the only acceptable bundle for agent 3 is $X_1$ and we have $X_1\in \mathcal{F}_2^{3}(E)$, $X_2, X_3\notin \mathcal{F}_2^{3}(E)$, $X_1\in \mathcal{F}_3^{3}(E)$, $X_2, X_3\notin \mathcal{F}_3^{3}(E)$. Consequently, we obtain the bipartite graph with edges
$$
(1,X_1),(1,X_2),(1,X_3),\quad (2,X_1),\quad (3,X_1).
$$
Obviously it does not have a perfect matching. An inclusion-minimal Hall-deficient set is $Q=\{X_2,X_3\}$, as $|N(Q)|=|\{1\}|=1$.

\medskip
\noindent
\textbf{Recursive step.}
Let the subset of $Q$ be $U=\{X_2\}$. We have $|U|=|Q|-1=1$ and $|N(U)|=|\{1\}|=1$. We fix the matching $(1,X_2)$. The remaining goods form the set
$$
E_1 = E\setminus X_2 = X_1 \cup X_3.
$$
We will now recursively solve the problem for agents 2 and 3 on $E_1$. First, we find an EF1 allocation with respect to the valuation $v_2$. An example is the following partition:
\begin{align*}
Y_1 &= \{(1,2),(2,3),(3,4),(1,5),(1,6)\},\\
Y_2 &= \{(1,4),(2,6),(3,6),(4,5),(5,6)\}.
\end{align*}

\medskip
\noindent
\textbf{Values of bundles, acceptable bundles.} We have:
\begin{align*}
v_2(Y_1)&=3+3+3+1+1=11\geqslant v_2^-(Y_2)=1+1+1+3=6,\\
v_2(Y_2)&=1+1+1+3+3=9 \geqslant v_2^-(Y_1)=3+3+1+1=8,
\end{align*}
\begin{align*}
v_3(Y_1)&=5+5+5+1+1=17 \geqslant v_3^-(Y_2)=1+1+1+5=8,\\
v_3(Y_2)&=1+1+1+5+5=13 \geqslant v_3^-(Y_1)=5+5+1+1=12.
\end{align*}
This means that $Y_1,Y_2\in \mathcal{F}_2^{2}(E_1)\cap\mathcal{F}_3^{2}(E_1)$, the corresponding bipartite graph is 
$$
(2,Y_1),(2,Y_2),\quad (3,Y_1),(3,Y_2),
$$
and a sample perfect matching:
$$
(2,Y_1)\quad (3,Y_2).
$$
This gives the final EF1-epistemic assignment:
$$
(1,X_2)\quad (2,Y_1)\quad (3,Y_2).
$$

\section{Proof of Theorem \ref{Theorem Matroid Hamiltonian}}

We start with the following key lemma.

\begin{lemma}\label{Lemma Circle Equal}
	Let the set $E$ of goods be arranged in a cyclic order. Suppose that $v$ is a monotone valuation of these goods. Then there is an EF1 partition of $E$ into two cyclic intervals whose sizes differ by at most one.
\end{lemma}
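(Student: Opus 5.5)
We will use a discrete intermediate value (``rotating the cut'') argument. Let $|E|=m$ and list the goods cyclically as $g_0,g_1,\ldots,g_{m-1}$, with indices taken modulo $m$. If $m$ is even, put $h=m/2$. For $t=0,1,\ldots,m-1$ let $I_t=\{g_t,\ldots,g_{t+h-1}\}$ and let $J_t=E\setminus I_t$. Both are cyclic intervals of size $h$, and $J_t=I_{t+h}$. Define $f(t)=v(I_t)-v(J_t)$. Then $f(t+h)=-f(t)$, so either $f$ vanishes somewhere or $f$ changes sign between some consecutive indices $t$ and $t+1$.

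Suppose $f(t)\geqslant 0\geqslant f(t+1)$. Passing from $I_t$ to $I_{t+1}$ removes $g_t$ and adds $g_{t+h}$. We claim the partition $(I_t,J_t)$ is already EF1.
\begin{itemize}
\item The owner of $J_t$ compares $v(J_t)$ with $v(I_t)$. Remove $g_t$ from $I_t$. Then $I_t\setminus\{g_t\}\subseteq I_{t+1}$, so by monotonicity $v(I_t\setminus\{g_t\})\leqslant v(I_{t+1})$.
\item On the other side, $J_{t+1}=(J_t\setminus\{g_{t+h}\})\cup\{g_t\}$, and we need $v(I_{t+1})\leqslant v(J_t)$. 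We only know $v(I_{t+1})\leqslant v(J_{t+1})$, which does not immediately give this.
\end{itemize}
So we refine the argument using the common set $C=I_t\cap I_{t+1}$, an interval of size $h-1$, and the common set $D=J_t\cap J_{t+1}$, also of size $h-1$. Consider the three values $v(C)$, $v(D)$ and the relevant comparisons. If $v(C)\leqslant v(D)$, take the partition $(I_t,J_t)$ or $(I_{t+1},J_{t+1})$, whichever has $v(I)\leqslant v(J)$. That is $(I_{t+1},J_{t+1})$, since $f(t+1)\leqslant 0$. The owner of $I_{t+1}$ then envies nothing. The owner of $J_{t+1}$ removes the good $g_t$ from $J_{t+1}$, leaving $D$, and $v(D)\geqslant v(C)$; but he needs $v(J_{t+1})\geqslant v(I_{t+1}\setminus\{x\})$ for some $x$. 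Removing $g_{t+h}$ from $I_{t+1}$ leaves $C$, and $v(C)\leqslant v(D)\leqslant v(J_{t+1})$, which works. Symmetrically, if $v(C)\geqslant v(D)$, use $(I_t,J_t)$. Since $f(t)\geqslant0$, the owner of $I_t$ is fine. The owner of $J_t$ removes $g_t$ from $I_t$, leaving $C$, and we would need $v(C)\leqslant v(J_t)$. This fails unless we swap the roles of the two cases: for $v(C)\geqslant v(D)$, check the pair $J$ versus $I$ in the partition where $J$ is the larger side. Organizing these four comparisons so that one of the two partitions always works is the key step. The clean formulation: in each of the two partitions, the poorer agent removes the single good not in $C\cup D$ from the richer side, leaving $C$ or $D$. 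So it suffices to pick the partition in which the poorer side contains the larger of $C$ and $D$. The poorer side is $I_{t+1}$ in one partition and $J_t$ in the other, and these contain $C$ and $D$ respectively. This handles both cases, up to ties, which are harmless.

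If $m$ is odd, $m=2h+1$, we run the same argument with intervals $I_t$ of size $h$ and $J_t$ of size $h+1$. Now $f$ need not be antisymmetric. Instead we compare $v(I_t)$ with $v(J_t)$ directly. If $v(I_t)\geqslant v(J_t\setminus\{x\})$ for a suitable endpoint $x$ of $J_t$ and $v(J_t)\geqslant v(I_t)$, we are done. The main obstacle is producing a sign change here. We plan to use the sets $I_t$ and $I'_t=I_t\cup\{g_{t+h}\}$ (the size-$(h+1)$ interval). Going around the circle, each size-$h$ interval is both the ``small'' side and contained in a ``large'' side of another partition, and summing $f$ over all $t$ should force a sign change. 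Then the same $C,D$ neighbor analysis as above, with common parts of sizes $h-1$ and $h$, gives EF1.
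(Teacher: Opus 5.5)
Your even case is correct and is the paper's argument. With the false starts removed, your ``clean formulation'' does the job: at a sign change $f(t)\geqslant 0\geqslant f(t+1)$, take $(I_t,J_t)$ if $v(D)\geqslant v(C)$ and $(I_{t+1},J_{t+1})$ if $v(C)\geqslant v(D)$. This is exactly the paper's Case 1/Case 2 split, with your $C,D$ playing the roles of its $X=A_1\setminus\{e_1\}$ and $Y=B_1\setminus\{e_{k+1}\}$.

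The odd case, however, is not proved, and the mechanism you propose for it is wrong. Once $|I_t|=h<h+1=|J_t|$, nothing forces $f(t)=v(I_t)-v(J_t)$ to change sign. For $v(S)=|S|$ one gets $f(t)=-1$ for every $t$. For additive $v$ in general, $\sum_t f(t)=h\,v(E)-(h+1)\,v(E)=-v(E)\leqslant 0$, which is compatible with $f<0$ throughout. For non-additive monotone $v$, sums of interval values carry no usable information at all. Your $C,D$ neighbour analysis does go through verbatim in the odd case, with $|C|=h-1$ and $|D|=h$, but only when some $f(s)\geqslant 0\geqslant f(s+1)$ exists. So it does not cover all instances.

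The paper sidesteps this with a dummy good.
It inserts $d$ with $v'(S)=v(S\setminus\{d\})$, which is still monotone, applies the even case to the $2h+2$ goods, and deletes $d$ from the interval $A$ containing it. EF1 survives: if $d$ was the witness removed from $A$, the other agent satisfies $v(B)\geqslant v(A\setminus\{d\})$ and so does not envy $A\setminus\{d\}$ at all; any other witness still works unchanged.

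If you prefer to keep your direct route, you need two additional observations.
First, $f>0$ everywhere is impossible. Since $J_t\supseteq I_{t+h}$, it would give $v(J_t)\geqslant v(I_{t+h})>v(J_{t+h})$, contradicting the choice of $t$ minimizing $v(J_t)$.
Second, if $f<0$ everywhere, pick $t$ maximizing $v(I_t)$. The owner of $J_t$ envies nobody, and since $J_t\setminus\{g_{t+2h}\}=I_{t+h}$, we get $v(I_t)\geqslant v(J_t\setminus\{g_{t+2h}\})$. So $(I_t,J_t)$ is EF1.
In all remaining cases, $f$ either has a zero (an envy-free cut) or has a consecutive pair $f(s)\geqslant 0>f(s+1)$, where your argument applies.
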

\begin{proof}
	Assume first that the size of the set $E$ is even. Let $(e_1,e_2,\ldots, e_{2k})$ be a fixed cyclic ordering of the set $E$ of goods. Let
	\[
	A_i=\{e_i,e_{i+1}, \ldots, e_{i+k-1}\},
	\]
	for $i=1,2,\ldots, 2k$, be a sequence of cyclic intervals of size $k$, where the indices of elements $e_j$ are considered modulo $2k$. Clearly, this leads to a sequence $P_i=(A_i,B_i)$ of partitions, $E=A_i\cup B_i$, where $B_i=A_{i+k}$, for each $i=1,2,\ldots, 2k$.
	
	Each pair $P_i$ is either \emph{equitable}, \emph{increasing}, or \emph{decreasing}, depending on whether $v(A_i)=v(B_i)$, $v(A_i)<v(B_i)$, or $v(A_i)>v(B_i)$, respectively. If there is any equitable pair, then we are done. Otherwise, each pair is either increasing or decreasing, but not all pairs can be of the same type. Indeed, since $A_{i+k}=B_i$ for all $i=1,2,\ldots,2k$, every two pairs $P_i$ and $P_{i+k}$ must have opposite types.
	
	Let $j$ be an index for which the pair $P_j$ is increasing while the pair $P_{j+1}$ is decreasing. We will demonstrate that one of these pairs is an EF1 partition of $E$.
	
	By shifting cyclically the indices, we may assume that $j=1$. Then we have $v(A_1)<v(B_1)$ and $v(A_2)>v(B_2)$. Let us denote $X=A_1\setminus\{e_1\}$ and $Y=B_1\setminus\{e_{k+1}\}$. In consequence, we have $A_2=X\cup\{e_{k+1}\}$ and $B_2=Y\cup\{e_1\}$. We need to consider two cases.
	\begin{case}
		$v(X)\geqslant v(Y)$.
	\end{case}
Then the pair $P_1=(A_1,B_1)$ satisfies EF1. Indeed, on one hand, we have $v(A_1)<v(B_1)$, by assumption. Thus, the agent receiving $B_1$ does not envy $A_1$, and by monotonicity, the required inequality remains true after removing a good from $A_1$. On the other hand,
\[
v(A_1)\geqslant v(X)\geqslant v(Y)=v(B_1\setminus\{e_{k+1}\}).
\]
\begin{case}
	$v(X)<v(Y)$.
\end{case}
Then the pair $P_2=(A_2,B_2)$ satisfies EF1. Indeed, now we have $v(A_2)>v(B_2)$, by assumption. Thus, the agent receiving $A_2$ does not envy $B_2$, and by monotonicity, the required inequality remains true after removing a good from $B_2$. On the other hand,
\[
v(B_2)\geqslant v(Y)>v(X)=v(A_2\setminus\{e_{k+1}\}).
\]
This completes the proof of the lemma in the even case.

For the odd case, let $|E|=2k-1$. Add a new dummy good $d\notin E$, insert it anywhere into the cyclic order, and denote the resulting cyclically ordered set by $E'=E\cup\{d\}$. Define a monotone valuation $v'\colon 2^{E'}\to \mathbb{R}_{\geqslant 0}$ by
\[
v'(S)=v(S\setminus\{d\})\qquad\text{for every }S\subseteq E'.
\]
Then $d$ is neutral. By the even case, the set $E'$ admits an EF1 partition into two cyclic intervals $A$ and $B$ of equal size $k$. Assume that $d\in A$ and put $A'=A\setminus\{d\}$. Then $A'$ and $B$ form a partition of the original set $E$ into two cyclic intervals whose sizes differ by at most one.

It remains to verify that the resulting partition of the original set $E$ is EF1. Consider the two directed EF1 comparisons in the partition $(A,B)$ of $E'$. If the witness good for a given comparison is different from $d$, then the same witness works after deleting $d$. If the witness good is $d$, then the envied bundle is $A$, since we assumed that $d\in A$; after replacing $A$ by $A'$, its value with respect to $v'$ does not increase, and the required inequality remains valid. Thus $(A',B)$ is EF1 for the original valuation $v$. This also covers the case $|E|=1$, where one of the two intervals may become empty; the EF1 condition is then interpreted using the convention for empty bundles stated above.
\end{proof}

\begin{proof}[Proof of Theorem \ref{Theorem Matroid Hamiltonian}]
Let $M=(E,\mathcal{I})$ be a Hamiltonian matroid of rank $r$, with $\chi(M)\leqslant 2$, and let the two agents have a common monotone valuation $v$. Since $\chi(M)\leqslant 2$, the set $E$ can be partitioned into two independent sets, and therefore $|E|\leqslant 2r$. By Hamiltonicity of $M$, the set $E$ can be arranged on a cycle so that each cyclic interval of length $r$ is a basis of $M$. By Lemma \ref{Lemma Circle Equal}, $E$ has an EF1 partition $E=A\cup B$ such that $A$ and $B$ are cyclic intervals with $\bigl||A|-|B|\bigr|\leqslant 1$. Each of the two intervals has length at most $r$. Every cyclic interval of length at most $r$ is contained in a cyclic interval of length $r$, which is a basis by Hamiltonicity; therefore both parts are independent.
\end{proof}

\noindent\textbf{Algorithmic reading of Theorem~\ref{Theorem Matroid Hamiltonian}.}
The procedure below does not attempt to recognize Hamiltonicity. Instead, it records the finite choices made by the proof once a Hamiltonian cyclic ordering is supplied as a certificate. The subroutine \textsc{CyclicEF1Cut} is exactly the search contained in Lemma~\ref{Lemma Circle Equal}; Hamiltonicity is used only after this search, to certify that the two cyclic intervals it returns are independent.

\begingroup
\SetAlgorithmName{Subroutine}{subroutine}{List of Subroutines}
\begin{algorithm}[H]
\SetAlgoLined
\SetKwFunction{CyclicCut}{CyclicEF1Cut}
\renewcommand{\thealgocf}{}
\KwIn{A cyclic ordering $C=(e_1,\ldots,e_m)$ of a finite set $E$, and a common monotone valuation $v$.}
\KwOut{A partition $(A,B)$ of $E$ into two cyclic intervals such that $(A,B)$ is EF1 and $\bigl||A|-|B|\bigr|\leqslant 1$.}
\If{$m=0$}{
    \Return $(\emptyset,\emptyset)$\;
}
\If{$m$ is odd}{
    Add a dummy good $d\notin E$ with $v'(T)=v(T\setminus\{d\})$, and place it after $e_m$ in the cycle\;
    $(A',B')\leftarrow$ \CyclicCut{$(e_1,\ldots,e_m,d),v'$}\tcp*[r]{now the ground set has even size}
    \Return $(A'\setminus\{d\},B'\setminus\{d\})$\;
}
Set $k\leftarrow m/2$\;
\For{$t=1$ \KwTo $m$}{
    $A_t\leftarrow\{e_t,e_{t+1},\ldots,e_{t+k-1}\}$ and $B_t\leftarrow E\setminus A_t$\tcp*[r]{indices are cyclic}
    \If{$v(A_t)=v(B_t)$}{
        \Return $(A_t,B_t)$\;
    }
}
Choose $t$ such that $v(A_t)<v(B_t)$ and $v(A_{t+1})>v(B_{t+1})$\tcp*[r]{such a sign change exists on the cycle}
Set $X\leftarrow A_t\setminus\{e_t\}$ and $Y\leftarrow B_t\setminus\{e_{t+k}\}$\;
\If{$v(X)\geqslant v(Y)$}{
    \Return $(A_t,B_t)$\;
}
\Else{
    \Return $(A_{t+1},B_{t+1})$\;
}
\caption{\textsc{CyclicEF1Cut}: the cyclic-interval search used in Lemma~\ref{Lemma Circle Equal}}\label{alg:cyclic-ef1-cut}
\end{algorithm}
\endgroup
\addtocounter{algocf}{-1}


\begin{algorithm}[H]
\SetAlgoLined
\SetKwFunction{CyclicCut}{CyclicEF1Cut}
\KwIn{A Hamiltonian matroid $M=(E,\mathcal I)$ of rank $r$ with $\chi(M)\leqslant 2$, a Hamiltonian cyclic ordering $C$ of $E$, and a common monotone valuation $v$.}
\KwOut{An EF1 partition $(A,B)$ of $E$ into two independent sets, with $\bigl||A|-|B|\bigr|\leqslant 1$.}
$(A,B)\leftarrow$ \CyclicCut{$C,v$}\;
\tcp{Since $\chi(M)\leqslant 2$, we have $|E|\leqslant 2r$; hence both returned intervals have size at most $r$.}
\tcp{Each interval of size at most $r$ is contained in a Hamiltonian interval of length $r$, hence is independent.}
\Return $(A,B)$\;
\caption{Algorithmic reading of Theorem~\ref{Theorem Matroid Hamiltonian}}\label{alg:hamiltonian-two-agent}
\end{algorithm}

\medskip
\noindent\textbf{EXAMPLE (graphic matroid of $K_4$).}
Let $G=K_4$ with vertex set $\{1,2,3,4\}$ and edge set
$$
E=\{12,13,14,23,24,34\}.
$$
Consider the graphic matroid $M=M(G)$, whose independent sets are forests in $G$. 
Its rank is $r=3$, and its bases are exactly the spanning trees of $K_4$. We claim that $M$ is Hamiltonian. To see that, consider the cyclic ordering:
$$
(13,24,14,23, 12, 34).
$$

Now assume that the valuation $v:2^E \to \mathbb{R}_{\ge 0}$ of both agents is defined as follows:
\begin{align*}
v(S) &= (\text{number of vertices covered by } S)\\
&+ 2\cdot (\text{number of edges in $S$ having the form $(i,i+1)$ }).
\end{align*}

This valuation is monotone (adding an edge never decreases neither the number of covered vertices nor the number of edges of a certain form), and is not additive, as e.g. $v(\{12\})+v(\{23\})=(2+2\cdot 1)+(2+2\cdot 1)=8$, but $v(\{12,23\})=3+2\cdot 2 = 7$.

\medskip
\noindent
\textbf{Cyclic intervals with sign change.}
Let $A_i$ be the cyclic intervals of length $3$, and $B_i=E\setminus A_i$.
We have:

\begin{align*}
A_1&=\{13,24,14\}, &v(A_1)&=4+2\cdot 0=4, &B_1&=\{23,12,34\}, &v(B_1)&=4+2\cdot 3=10,\\
A_2&=\{24,14,23\}, &v(A_2)&=4+2\cdot 1=6, &B_2&=\{12,34,13\}, &v(B_2)&=4+2\cdot 2=8, \\
A_3&=\{14,23,12\}, &v(A_3)&=4+2\cdot 2=8, &B_3&=\{34,13,24\}, &v(B_3)&=4+2\cdot 1=6.
\end{align*}

We observe that $v(A_2)<v(B_2)$ and $v(A_3)>v(B_3)$, i.e., $(A_2,B_2)$ is increasing, but $(A_3,B_3)$ is decreasing. 

\medskip
\noindent
\textbf{Application of Lemma \ref{Lemma Circle Equal}.}
We have 
\begin{align*}
X &= A_2 \setminus \{24\} = \{14,23\}, &v(X) &= 4+2\cdot 1=6,\\
Y &= B_2 \setminus \{12\} = \{34,13\}, &v(Y) &= 3+2\cdot 1=5.
\end{align*}
This means that $v(X)\geqslant v(Y)$ and Case 1 applies, i.e., the pair $(A_2,B_2)$ is the desired partition.

\medskip
\medskip

Let us present yet another simple example, simultaneously showing the aspects of Theorems \ref{Theorem Epistemic Matroid} and \ref{Theorem Matroid Hamiltonian}.

\medskip
\noindent\textbf{EXAMPLE (illustration of both theorems).}
The following example illustrates both Theorem~\ref{Theorem Epistemic Matroid} and Theorem~\ref{Theorem Matroid Hamiltonian}.  Let $G$ be the subgraph of the clique $K_6$ on vertex set $\{1,2,3,4,5,6\}$ whose seven edges are
\[
\begin{aligned}
 e_1&=12, & e_2&=13, & e_3&=14, &
 e_4&=25, & e_5&=26, & e_6&=35, & e_7&=46 .
\end{aligned}
\]
Let $M=M(G)$ be the graphic matroid. Thus the goods are the edges $e_1,\ldots,e_7$, and a bundle is feasible precisely when it is a forest. Since $G$ is connected on six vertices, $r(M)=5$; equivalently, the bases of $M$ are the spanning trees of $G$, each having five edges.

\noindent{}\textbf{Illustration of Theorem\ref{Theorem Epistemic Matroid}.} First consider three agents with additive valuations
\[
\begin{array}{c|ccccccc}
 & e_1 & e_2 & e_3 & e_4 & e_5 & e_6 & e_7\\
\hline
v_1 & 3 & 3 & 3 & 1 & 1 & 1 & 1\\
v_2 & 1 & 1 & 1 & 4 & 4 & 1 & 1\\
v_3 & 1 & 1 & 1 & 1 & 1 & 1 & 1
\end{array}
\]
and the partition
\[
X_1=\{e_1,e_2,e_3\},\qquad
X_2=\{e_4,e_5\},\qquad
X_3=\{e_6,e_7\}.
\]
Each $X_i$ is a forest, so this is a partition into independent sets. Moreover,
\[
v_1(X_1)=9,
\qquad
v_2(X_2)=8,
\qquad
v_3(X_3)=2.
\]
For agent $1$, the other two bundles have value $2$ each. For agent $2$, the other two bundles have values $3$ and $2$. For agent $3$, the values of the three bundles are $3,2,2$, and deleting any one edge from $X_1$ leaves value $2$. Hence, this allocation is already EF1, and therefore also epistemic EF1 for all agents, in particular for agent 3. In the recursive reading of Theorem~\ref{Theorem Epistemic Matroid}, this is the case when the perfect matching exists: if one starts with the valuation of agent $3$ and \textsc{EF1Color} returns the above partition for the unit valuation, then the corresponding bipartite graph contains the matching
$$
(a_1,X_1),\quad (a_2,X_2),\quad (a_3,X_3),
$$
so no recursive split is needed.

\noindent{}\textbf{Illustration of Theorem\ref{Theorem Matroid Hamiltonian}} The same graphic matroid also illustrates Theorem~\ref{Theorem Matroid Hamiltonian}. Consider the cyclic order
$$
(e_1,e_2,e_3,e_4,e_5,e_6,e_7).
$$
Every block of five consecutive edges in this order is a spanning tree of $G$, so this cyclic order is Hamiltonian for the graphic matroid. Let the common valuation (monotone but not additive) be defined by
\begin{align*}
v(S) &= (\text{number of edges in }  S \text{ incident with $3$})^2\\
& + (\text{number of edges in }  S \text{ incident with $4$})^2.
\end{align*}
The partition 
\begin{align*}
A=\{e_1,e_2,e_3\},\\
B=\{e_4,e_5,e_6,e_7\}
\end{align*}
satisfies the conditions, since both parts are forests, their sizes are $3$ and $4$, and $v(A)=v(B)=2$.

\section{Final remarks}\label{section:finalremarks}
We conclude the paper with some problems for future consideration. One direction is to look for the \emph{chores} variant of fair division. In this setting, agents prefer to have less valuable bundles since the envy occurs when the other agent's bundle has strictly smaller value. More formally, the allocation $(X_1,\ldots, X_n)$ is \emph{EF1 for chores} if for every pair $i\neq j$, either $X_i=\emptyset$, or there exists a chore $c\in X_i$ such that $v_i(X_i\setminus\{c\})\leqslant v_i(X_j)$. Consequently, the allocation $(X_1,\ldots, X_n)$ is \emph{epistemic EF1 for chores} if for every $i\in A$, there is another (\emph{imaginary}) allocation $(Y_1,\ldots, Y_n)$ such that $X_i=Y_i$ and for every $j\neq i$, either $X_i=\emptyset$, or there exists a chore $c\in X_i$ such that $v_i(X_i\setminus\{c\})\leqslant v_i(Y_j)$.

It is not hard to verify with this chore version of EF1 that the analogue of Theorem \ref{Theorem Epistemic Matroid} holds for chores. Clearly, the statement of Conjecture \ref{Conjecture Matroid One Valuation} is equivalent for chores and goods, since all agents have the same valuation. However, it is not clear if the same holds for the statement of Conjecture \ref{Conjecture Matroid Many Valuations}.

\begin{problem}
	Assuming validity of Conjecture \ref{Conjecture Matroid Many Valuations}, is the same statement true for chores?
\end{problem}

Another interesting extension is to allow the agents not only to have arbitrary valuations but also to have individual constraints. Suppose that $M_1,M_2,\ldots, M_n$ are matroids on the same ground set $E$, each satisfying the inequality $\chi(M_i)\leqslant n$. Is it then true that there is an EF1 partition of $E$ such that the $i$-th agent's bundle is an independent set in the matroid $M_i$? Notice that it is not a priori obvious that any partition into such sets of $E$ always exists, however, it was confirmed independently several times (see \cite{DrorFeldmanSegalHalevi2023}). Below we formulate a general conjecture.

\begin{conjecture}\label{Conjecture Various Matroids}
Let $M_1, \ldots, M_n$ be a collection of matroids on the same ground set $E$ satisfying $\chi(M_i)\leqslant n$, for all $i=1,2,\ldots, n$. Suppose that there are $n$ agents with additive valuations $v_1,\ldots, v_n$ of the set of goods (or chores) $E$. Then there exists an EF1 partition $E=X_1\cup \cdots \cup X_n$ such that each set $X_i$ is independent in the matroid $M_i$, for all $i=1,2,\ldots,n$.
\end{conjecture}

Again, it would be nice to know if this statement follows from Conjecture \ref{Conjecture Matroid Many Valuations}.

\begin{problem}
	Does Conjecture \ref{Conjecture Various Matroids} follow from Conjecture \ref{Conjecture Matroid Many Valuations}?
\end{problem}

Finally, it would be nice to recognize some other independence structures beyond matroids with a chance for having EF1 partitions, at least in the case of one additive valuation and two agents. One natural candidate arises from a classic combinatorial problem of \emph{hypergraphs} with \emph{property B}.

Suppose that $H$ is a \emph{$k$-uniform} hypergraph on the vertex set $V$. A subset $X$ of the set $V$ is called \emph{independent} if there is no hyperedge of $H$ that is fully contained in $X$. A hypergraph $H$ is called \emph{bipartite} if there is a partition of $V$ into two independent sets. In other words, $H$ is a \emph{$2$-colorable} hypergraph, which is denoted by $\chi(H)\leqslant 2$. Suppose now that there is an additive valuation $v$ defined on the set of goods (or chores) $V$. When is it possible to find a $2$-coloring of $H$ with color classes forming an EF1 partition of $V$?

One easy necessary condition follows from taking $v$ as a unary function assigning value one to every item. Then the two color classes $A,B$ of the bipartition $V=A\cup B$ should be almost equal, that is, should satisfy the inequality $||A|-|B||\leqslant 1$. Let us call bipartite hypergraphs having such bipartition \emph{balanced}.

\begin{problem}
	Characterize bipartite balanced $k$-uniform hypergraphs having an EF1 bipartition, for any additive valuation $v$ and two agents.
\end{problem}

\bibliographystyle{abbrv}
\bibliography{fairallbib}
\end{document}